\newtheorem{theorem}{Theorem}[section]
\newtheorem*{theorem*}{Theorem}
\newtheorem{corollary}[theorem]{Corollary}
\newtheorem{lemma}[theorem]{Lemma}
\newtheorem{proposition}[theorem]{Proposition}
\theoremstyle{definition}
\newtheorem{example}[theorem]{Example}
\theoremstyle{remark}
\newtheorem{remark}[theorem]{Remark}
\def\GL{{\mathrm{GL}}}
\def\FF{{\mathbb{F}}}
\def\CC{{\mathbb{C}}}
\def\ZZ{{\mathbb{Z}}}
\def\SS{{\mathfrak S}}
\def\tt{{\mathbf{t}}}
\def\Fl{{\mathcal{F}\ell}}
\def\wt{{\mathrm{wt}}}
\newcommand{\qbinom}[3]{\genfrac{[}{]}{0pt}{}{#1}{#2}_{#3}}
\author{Andrew Berget}
\address{Department of Mathematics, University of California, Davis, CA 95616}
\email{berget@math.ucdavis.edu}
\author{Jia Huang}
\address{Department of Mathematics, University of Minnesota, Minneapolis, MN 55455}
\email{huang338@math.umn.edu}
\title{Cyclic Sieving of
  Finite Grassmannians and Flag Varieties}
\thanks{Berget was partially supported by NSF grant DMS-0636297. Huang
  was partially supported by NSF grant DMS-1001933.}
\begin{document}

\begin{abstract}
  In this paper we prove instances of the cyclic sieving phenomenon
  for finite Grassmannians and partial flag varieties, which carry
  the action of various tori in the finite general linear group
  $\GL_n(\FF_q)$. The polynomials involved are sums of certain
  weights of the minimal length parabolic coset representatives
  of the symmetric group $\mathfrak{S}_n$, where the weight of a
  coset representative can be written as a product over its inversions.
\end{abstract}

\pagestyle{plain}

\maketitle

\section{Introduction}

The {\it cyclic sieving phenomenon (CSP)} was introduced by Reiner,
Stanton and White \cite{RSW}, generalizing Stembridge's \emph{$q=-1$
  phenomenon} \cite{stembridge}.  It has been the subject of a flurry
of recent papers in combinatorics, including a survey by Sagan
\cite{sagan}. As defined in \cite{BER}, the CSP pertains to a finite
set $X$, carrying a permutation action of a finite abelian group
written explicitly as a product $C:=C_1 \times \cdots \times C_m$ of
cyclic groups $C_i$, and a polynomial $X(\tt):=X(t_1,\ldots,t_m)$ in
$\ZZ[\tt]$. The polynomial is thought of as a generating function
for the elements of $X$ according to some natural statistic.  One
says that the triple $(X,X(\tt),C)$ {\it exhibits the CSP} if after
choosing embeddings of groups $\omega_i:C_i \hookrightarrow
\CC^\times$, one has for every $c=(c_1,\ldots,c_m)$ in $C$ that the
cardinality of the fixed point set of $c$ is given by
\[
\left[ X(\tt) \right]_{t_i=\omega_i(c_i)} = |\{x \in X: c(x)=x\}|.
\]

The goal of this paper is to prove several instances of the CSP that
pertain to the action of a maximal torus on a finite Grassmannian,
and more generally on a finite partial flag variety. We will always
take our flag varieties to be defined over $\FF_q$, a finite field
with $q$ elements. Our main theorem is technical to state
explicitly, so here we content ourselves with the following
rendition:
\begin{theorem*}
  Let $n$ and $k$ be positive integers, $k \leq n$ and
  $\alpha=(\alpha_1,\ldots,\alpha_\ell)$ a composition of $n$.
  There is an action of the torus $T_\alpha :=
  \prod_{i=1}^\ell \FF_{q^{\alpha_i}}^\times$ on the Grassmannian
  $G_k(n)$ of $k$-dimensional $\FF_q$-subspaces of $\FF_q^n$.

  For each partition $\lambda$ in a $k$-by-$(n-k)$ box there is an
  associated weight $\wt(\lambda;\alpha,k)$,
  %depending only on $\lambda$, $\alpha$ and $k$,
  which is a product over the cells of $\lambda$ and is a
  polynomial in $t_1,\dots,t_\ell$, such that the triple
  \[
  \left( G_k(n),\ \sum_{\lambda \subset (n-k)^k}
  \wt(\lambda;\alpha,k),\ T_\alpha \right)
  \]
  exhibits the cyclic sieving phenomenon.

More generally, if $\beta=(\beta_1,\ldots,\beta_k)$ is a composition
of $n$, and $\Fl(\beta)$ is the variety of partial flags of
subspaces of $\FF_q^n$ whose dimensions are given by the partial
sums $\beta_1+\cdots+\beta_i$, then there is an action of $T_\alpha$
on $\Fl(\beta)$ and a polynomial $X_{\alpha,\beta}(\tt)$ such that
$(\Fl(\beta), X_{\alpha,\beta}(\tt) ,T_\alpha)$ exhibits the cyclic
sieving phenomenon.
\end{theorem*}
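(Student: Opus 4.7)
The plan is to compute the fixed-point set $\Fl(\beta)^c$ for an arbitrary $c=(c_1,\ldots,c_\ell)\in T_\alpha$ in two different ways and identify the results. The direct count uses that $\Fl(\beta)^c$ parametrizes flags of $\FF_q[c]$-submodules inside $\FF_q^n = V_1\oplus\cdots\oplus V_\ell$, where $V_i\cong\FF_{q^{\alpha_i}}$ carries the multiplication action of the $i$th factor of $T_\alpha$. A module-theoretic analysis via the Jordan decomposition of $c$ refines $|\Fl(\beta)^c|$ into a $q$-multinomial-like sum, indexed by how the flag dimensions split across the primary components of $\FF_q[c]$.

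The second count uses a Bruhat decomposition $\Fl(\beta)=\bigsqcup_w X_w^\circ$ indexed by the minimal length coset representatives $w$ of $\SS_n/\SS_\beta$, taken relative to a reference flag adapted to $\FF_q^n=\bigoplus V_i$. Each cell $X_w^\circ$ is an affine space over $\FF_q$ of dimension $|\mathrm{Inv}(w)|$, parametrized by entries indexed by the inversions of $w$; in this parametrization $T_\alpha$ acts diagonally, each entry transforming by a character $\chi_{i,j}(\tt)$ that is a Laurent monomial in $t_1,\ldots,t_\ell$ determined by which blocks of $\alpha$ contain the positions involved in the inversion. The number of $c$-fixed $\FF_q$-points in each cell then factors as a product over the inversions, so $|\Fl(\beta)^c| = \sum_w |X_w^{\circ,c}|$ with each cell contribution equal to the specialization $[\wt(w;\alpha,\beta)]_{t_i=\omega_i(c_i)}$ of the monomial weight $\wt(w;\alpha,\beta) = \prod_{(i,j)\in\mathrm{Inv}(w)}\chi_{i,j}(\tt)$. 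Summing over $w$ defines $X_{\alpha,\beta}(\tt)$ and establishes the cyclic sieving identity at $c$; the Grassmannian case corresponds to $\beta=(k,n-k)$, identifying the weights $\wt(\lambda;\alpha,k)$ of the theorem.

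The main obstacle is the equivariant parametrization of the Schubert cells in the second step: producing column-echelon representatives of each minimal coset in which the multiplication action of $T_\alpha$ on $\bigoplus V_i$ translates into diagonal scaling of the free entries with the predicted characters $\chi_{i,j}$. This requires carefully marrying the combinatorics of $w$ with the $\FF_{q^{\alpha_i}}$-structure on each $V_i$, and then verifying inversion by inversion that the resulting monomials assemble into the announced weight; matching them against the direct module-theoretic count of $|\Fl(\beta)^c|$ supplies the combinatorial identity. As a sanity check, the case $\ell=1$ (so $\alpha=(n)$ and $T_\alpha=\FF_{q^n}^\times$ acting by multiplication on $\FF_q^n\cong\FF_{q^n}$) should recover the cyclic sieving theorem of Reiner, Stanton, and White for finite Grassmannians, and this special case can guide the choice of characters $\chi_{i,j}$ in the general setup.
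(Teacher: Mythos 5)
Your high-level outline (decompose $\Fl(\beta)$ into Schubert cells, count fixed points cell by cell, and read off a weight per cell) does track the paper's strategy, but the central claim of the proposal is false for general $\alpha$: you assert that in a suitable parametrization of each Schubert cell, $T_\alpha$ acts diagonally with each free entry transforming by a Laurent-monomial character $\chi_{i,j}(\tt)$. This is only true for the maximally split torus $\alpha=(1^n)$, where a torus element literally rescales columns. For a general $\alpha$, an element $u_r\in\FF_{q^{\alpha_r}}^\times$ acts on the $r$-th block of columns as (several copies of) the companion matrix $U_r$ of its minimal polynomial over $\FF_q$ (see Corollary~\ref{cor:action}), and this action does not preserve the coordinate lines of the cell or act by scalars on individual star entries. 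Consequently the number of $u$-fixed $\FF_q$-points of a cell does not factor as a product of $1$'s and $q$'s over inversions in the way you describe, and your formula $\wt(w;\alpha,\beta)=\prod\chi_{i,j}(\tt)$, being a Laurent monomial, cannot match the announced weights, which are products of $q$-numbers $[a,b]=(a^q-b^q)/(a-b)$ — polynomials that evaluate to $1$ or $q$ at appropriate roots of unity. Even in the $(1^n)$ case the per-inversion contribution is $[t_i,t_j]$, not $\chi_{i,j}$.

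What fills this gap in the paper is a three-part analysis of fixed subspaces: (i) a structure lemma (Lemma~\ref{lem:structure}, generalized to flags in Lemma~\ref{lem:FlagStructure}) showing that a $u$-fixed subspace has a block row-echelon form whose pivots are $d_r\times d_r$ identity blocks; (ii) for strictly off-diagonal blocks, the fixed-point equation becomes a Sylvester equation $U_r X = X U_s$, whose $\FF_q$-solution count is given by the Cecioni--Frobenius theorem and matches a product of $q$-numbers (Lemma~\ref{UpperBlock}); and (iii) for the diagonal blocks, the count is the number of $\FF_q[u_r]$-submodules of $\FF_{q^{\alpha_r}}$ of prescribed dimension, which is a nontrivial input supplied by Reiner--Stanton's Theorem~5.2 (the specialization of their $(q,t)$-binomial). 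Your proposal correctly flags "the equivariant parametrization" as the main obstacle, but that obstacle is not a matter of bookkeeping: without (ii) and especially (iii), the cell-by-cell count simply cannot be carried out, and the reduction to a diagonal scaling action is not available over $\FF_q$ for $\alpha\neq(1^n)$.
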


Some remarks are in order. The first is that the determination of
the weights $\wt(\lambda;\alpha,k)$ is entirely elementary. It
consists of breaking up $\lambda$ into smaller partitions,
determined by $\alpha$ and $k$, and then evaluating the relative
positions of the cells of the smaller partitions within certain
boxes. For partial flag varieties the polynomials
$X_{\alpha,\beta}(\tt)$ are determined in a similar fashion.

The second remark is that in the extreme case $\alpha= (n)$, our
polynomials $X_{\alpha,\beta}(\tt)$ coincide with the
$(q,t)$-multinomial coefficients $\qbinom{n}{\beta}{q,t}$ of Reiner,
Stanton and White \cite[\S9]{RSW}. In this special case, our main
theorem was one of the foundational examples of the CSP in
\cite{RSW}.

Thirdly, the weights will visibly have the property
\[
\wt(\lambda;\alpha,k)|_{t_i= 1} = q^{|\lambda|}
\]
where $|\lambda|$ is the sum of its parts. Thus, our formula is a
carefully crafted refinement of the well-known expression
\[
|G_k(n)| = \sum_{\lambda \subset (n-k)^k} q^{|\lambda|}.
\]
It is equally well-known that this expression factors as
\[
|G_k(n)| = \sum_{\lambda \subset (n-k)^k} q^{|\lambda|} = \frac{(q^n
  -1)(q^n - q)\dots (q^n-q^{n-k+1})}{(q^k-1)(q^k-q)\dots (q^k -
  q^{k-1})}.
\]
For general $\alpha$ the generating function $\sum_\lambda
\wt(\alpha;\lambda,k)$ does not appear to have a similar
factorization. However, we will show that a natural refinement of it
does, and specializing at $t_1,\ldots,t_\ell=1$ gives rise to the
well-known $q$-Vandermonde identity. In fact, our proof for the CSP
on partial flag varieties gives a generalized $q$-Vandermonde
identity
\[
  \qbinom{\alpha_1+\cdots+\alpha_\ell}{\beta_1,\cdots,\beta_m}{q}
  =\sum_{\beta^{(1)},\ldots,\beta^{(\ell)}}
  \prod_{r=1}^\ell\qbinom{\alpha_r}{\beta^{(r)}}{q}
  \prod_{1\leq s\leq r\leq\ell} \prod_{1\leq i<j\leq
  m}q^{\beta^{(r)}_i\beta^{(s)}_j},
  \]
summed over all weak compositions
$\beta^{(r)}=(\beta^{(r)}_1,\ldots,\beta^{(r)}_m)$ of $\alpha_r$,
$1\leq r\leq\ell$, with component-wise sum
$\beta^{(1)}+\cdots\beta^{(\ell)}=\beta$.

Lastly, there is also a ``$q=1$'' version of our main result on
flags that pertains to flags of sets. For $r=1,\ldots,\ell$, let
$C_r$ be a cyclic group generated by a \emph{regular element} $c_r$
in the symmetric group  $\mathfrak S_{\alpha_r}$, {\it i.e.}, $C_r$
acts \emph{nearly freely} on $[\alpha_r]$. The triple
\[
\left({[\alpha_r]\choose \beta^{(r)}},\
\qbinom{\alpha_r}{\beta^{(r)}}{t_r},\ C_r\right)
\]
exhibits the CSP, where $\beta^{(r)}$ is a weak composition of
$\alpha_r$, and  ${[\alpha_r]\choose \beta^{(r)}}$ is the set of all
partial flags of subsets of cardinality
$\beta^{(r)}_1,\beta^{(r)}_1+\beta^{(r)}_2,\ldots$ in the set
$[\alpha_r]=\{1,\ldots,\alpha_r\}$; see \cite[\S1]{RSW}. The
embedding $\mathfrak S_{\alpha_1}\times\cdots\times\mathfrak
S_{\alpha_\ell} \hookrightarrow \mathfrak{S}_n$ sends
$C_1\times\cdots\times C_\ell$ to a subgroup $T_\alpha$ of
$\mathfrak S_n$. A moment's thought reveals that the triple
\[
\left({[n]\choose\beta},\ Y_{\alpha,\beta}(\tt),\ T_\alpha\right)
\]
exhibits the cyclic sieving phenomenon for any composition $\beta$
of $n$, with
\begin{equation}\label{eq:q=1}
Y_{\alpha,\beta}(\tt) = \sum_{\beta^{(1)},\ldots ,\beta^{(\ell)}}
\qbinom{\alpha_1}{\beta^{(1)}}{t_1}\cdots
\qbinom{\alpha_\ell}{\beta^{(\ell)}}{t_\ell},
\end{equation}
summer over all weak compositions $\beta^{(r)}$ of $\alpha_r$,
$r=1,\ldots,\ell$, whose component-wise sum is $\beta$. One will see
in Remark~\ref{rem:CSPflag} that $Y_{\alpha,\beta}(\tt)$ can be
viewed as a ``$q=1$'' version of $X_{\alpha,\beta}(\tt)$.

Our paper is organized as follows. In Section~\ref{sec:precise} we
set up the complete statement of our main theorem for cyclic sieving
of finite Grassmannians. In Section~\ref{sec:proof} we then prove
the main theorem, which will follow from a series of lemmas. In
Section~\ref{sec:flags} we extend the main result to the finite flag
varieties.

\section{Precise statement of the main result}\label{sec:precise}

In this section we carefully define the polynomials involved in the
statement of our main theorem, as well as the torus
actions. Throughout this discussion we have fixed two integers $n$ and
$k$, $k \leq n$, which define the Grassmannian that we are working in.

\subsection{Partition statistics}\label{sec:partition}
A (weak) composition $\alpha=(\alpha_1,\dots,\alpha_\ell)$ of $n$ is
a sequence of positive (nonnegative) integers $\alpha_i$ such that
$n=\alpha_1+\cdots+\alpha_\ell$. The integers $\alpha_i$ are called
\emph{parts of $\alpha$}. Fix a composition $\alpha$ with exactly
$\ell$ parts.

Associated to each partition $\lambda\subset (n-k)^k$ is a
\emph{(reduced) row echelon form}, which is a matrix of $1$'s
(\emph{pivots}), $0$'s, and $*$'s. The matrix is in row echelon
form, in the sense that every pivot $1$ sees zeros in directions
east, north and south. The columns containing the $1$'s are called
\emph{pivot columns}, and their positions are $\{\lambda_i+k-i+1: 1
\leq i \leq k\}$. The remaining spots are filled with stars so that
the $i$-th row contains exactly $\lambda_i$ stars.

The composition $\alpha$ breaks up the columns of this matrix into
blocks of sizes $\alpha_1$,\dots, $\alpha_\ell$, labeled by
$1,\ldots,\ell$, \emph{from right to left}. We set $\beta(\lambda) =
(\beta_1,\dots,\beta_\ell)$ so that $\beta_j$ is the number of $1$'s
in the $j$-th block of columns. To be formal,
\[
\beta_j:= \#\{i: \alpha_\ell + \dots + \alpha_{j+1} <
\lambda_i+k-i+1 \leq \alpha_\ell + \dots + \alpha_j\}.
\]
Note that $\beta(\lambda)$ is a weak composition of $k$ into $\ell$
parts, and this association is not injective.

\begin{example}
  Take $n=9$, $k=4$ and $\alpha = (4,2,3)$. The composition
  associated to $\lambda = (5,4,1,1)$
  is $\beta(\lambda) = (2,0,2)$, as is evidenced by the matrix below
  or using the formula above.
  \[
   \left[\begin{tabular}{c|c|c}
  $*\ 0\ 0$ & $*\ *$ & $*\,0\,*\,1$   \\
  $*\ 0\ 0$ & $*\ *$ & $*\,1\ 0\ 0$   \\
  \hline\hline
  $*\ 0\ 1$                 \\
  $*\ 1\ 0$                 \\
  \end{tabular}\right]
  \]
  We have indicated the composition $\alpha$ as blocks of columns, and
  the composition $\beta$ as blocks of rows (the double line has an
  empty block in between its lines).
\end{example}

As above, we can partition the row echelon form of every $\lambda$
into a block triangular matrix. The $(r,s)$-th block is of size
$\beta_r$-by-$\alpha_s$, and the $*$'s in it give rise to a
partition that fits in a $\beta_r$-by-$(\alpha_s-\beta_s)$ box.
Denote this partition by $\lambda^{r,s}$, which only depends on
$\alpha$, $\lambda$ and $k$. Note that $\lambda^{r,r}$ can be
arbitrary, while $\lambda^{r,s}$ is always a
$\beta_r$-by-$(\alpha_s-\beta_s)$ rectangle if $r<s$, or empty if
$r<s$. By sending $\lambda$ to $[\lambda^{r,s}]$ one obtains
bijection
\begin{multline}\label{eq:lambda}
\left\{\lambda:\beta(\lambda)=\beta\right\}\xrightarrow{\sim} \\
\left\{[\lambda^{r,s}]:\lambda^{r,r}\subseteq(\alpha_r-\beta_r)^{\beta_r},
\lambda^{r,s}=(\alpha_s-\beta_s)^{\beta_r}\ (r<s),\
\lambda^{r,s}=\emptyset\ (r>s)\right\}.
\end{multline}

Denote by $[a,b]$ the \emph{$q$-number}
\[
[a,b]:= \frac{a^q - b^q}{a-b} = \sum_{i+j=q-1} a^i b^j.
\]
If $a$ and $b$ are $(q-1)$-th roots of unity then $[a,b]=1$ when $a
\neq b$ or $[a,b]=q$ when $a=b$.

We define the \emph{weight of a cell $x \in \lambda^{r,s}$} to be
\[
\wt(x;t_r,t_s) :=
 \left[ t_r^{q^{i(x)+j(x)}}, t_s^{q^{i(x)+\beta_r}} \right].
\]
Here $i(x)$ and $j(x)$ are the horizontal and vertical distances of
$x$ from the bottom-left cell of the $\beta_r$-by-$(\alpha_s
-\beta_s)$ box in which $\lambda^{r,s}$ fits. Define the
\emph{weight of $\lambda^{r,s}$} to be
\[
\wt(\lambda^{r,s};t_r,t_s):=\prod_{x\in\lambda^{r,s}}
\wt(x;t_r,t_s).
\]
Define the \emph{weight of $\lambda$} to be
\[
\wt(\lambda;\alpha,k):=\prod_{1 \leq r \leq s \leq \ell}
\wt(\lambda^{r,s};t_r,t_s).
\]
We see at once that $\wt(\lambda;\alpha,k)$ is a polynomial in
$t_1,\dots,t_\ell$. This choice of a weight was directly influenced
by the results of Reiner and Stanton in \cite{ReinerStanton}. In the
case $\alpha=(n)$ our weight is exactly the weight they associate to
$\lambda$.

\begin{example}
  As before, take $n=9$, $k=4$, $\alpha =(4,2,3)$, and
  $\lambda = (5,4,1,1)$. The labeling of all the cells of $\lambda$
  with their weights gives the following diagram:
  \[
   \left[\begin{tabular}{c|cc|cc}
  $[t_1^q,t_3^{q^2}]$ & $[t_1^q,t_2^{q^2}]$ & $[t_1^{q^2},t_2^{q^3}]$ & $[t_1^q,t_1^{q^2}]$ & $[t_1^{q^2},t_1^{q^3}]$ \\
  $[t_1,t_3^{q^2}]$   & $[t_1,t_2^{q^2}]$   & $[t_1^q,t_2^{q^3}]$     & $[t_1,t_1^{q^2}]$   \\
  \hline\hline
  $[t_3^q,t_3^{q^2}]$                 \\
  $[t_3,t_3^{q^2}]$                 \\
  \end{tabular}\right]
  \]
\end{example}

\begin{example}
  If $\alpha=(1,1,\dots,1)$ then $\beta(\lambda)$ is what is sometimes
  referred to as the ``code'' associated to $\lambda$. That is,
  $\beta(\lambda)$ has a $1$ in positions $n-k-\lambda_i + i$, $1 \leq
  i \leq k$, and zeros elsewhere. The weight of $\lambda$ is
  \[
  \wt(\lambda;1^n,k) = \prod_{\substack{1 \leq i \leq k \\1 \leq j \leq \lambda_j}}
  [ t_{k-\lambda_j'+j} ,  t_{n-k-\lambda_i + i}].
  \]
  Here, $\lambda'$ denotes the conjugate partition, and $\lambda_j'$
  is its $j$-th part, {\it i.e.} the number of parts of $\lambda$
  that are at least $j$.
\end{example}

\subsection{Tori in $\GL_n(\FF_q)$}
Let $\GL_n(K)$ denote the group of $n$-by-$n$ matrices with non-zero
determinant in the field $K$. If $K$ is an algebraically closed
field then a maximal torus in $\GL_n(k)$ is a subgroup conjugate to
the group of diagonal matrices $(K^\times)^n \subset \GL_n(K)$.
Since $\FF_q$ is not algebraically closed, the definition of a torus
in this group is more subtle.

Let $F$ be denote Frobenius automorphism $F(x)= x^q$ of the
algebraic closure $K$ of $\FF_q$. A maximal torus $T \subset
\GL_n(K)$ is $F$~-~stable if $F(T) = T$. A \emph{maximal torus} in
$\GL_n(\FF_q)$ consists of the $F$~-~fixed points of an $F$~-~stable
torus $T$. See Carter \cite[Chapter 3]{carter} for more details
related to these objects.

We now define a class of maximal tori in $\GL_n(\FF_q)$ labeled by
compositions of $n$. Given a composition $\alpha =
(\alpha_1,\dots,\alpha_\ell)$ of $n$, consider the $n$-dimensional
$\FF_q$-vector space
\[
V_\alpha:=\FF_{q^{\alpha_1}} \oplus \dots \oplus
\FF_{q^{\alpha_\ell}}.
\]
The multiplications of the factors give rise to $\FF_q$-linear
automorphisms of $V_\alpha$. Choosing an isomorphism $V_\alpha \cong
\FF_q^n$ one has an injection of groups
\[
T_\alpha:=\FF_{q^{\alpha_1}}^\times \times \dots \times
\FF_{q^{\alpha_\ell}}^\times \hookrightarrow \GL_n(\FF_q)
\]
whose image is a maximal torus in $\GL_n(\FF_q)$ (this will be seen
explicitly in Corollary~\ref{cor:action}).

The Grassmannian $G_k(V_\alpha)\cong G_k(n)$ of the $k$-dimensional
subspace of $\FF_q^n$ can be thought of as the collection of full
rank $k$-by-$n$ matrices with entries in $\FF_q$, modulo the natural
left action by $\GL_k(\FF_q)$. The group $\GL_n(\FF_q)$ acts on the
right of $G_k(n)$ by multiplication. It follows that after embedding
$T_\alpha$ in $\GL_n(\FF_q)$ there is a right action of $T_\alpha$
on $G_k(n)$.
\begin{proposition}
  The number of fixed points of an element $T_\alpha$ as it acts on
  the Grassmannian $G_k(n)$ does not depend on the embedding of
  $T_\alpha$ in $\GL_n(\FF_q)$.
\end{proposition}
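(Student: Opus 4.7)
The plan is to show that any two embeddings of $T_\alpha$ in $\GL_n(\FF_q)$ produced by the construction above are conjugate by an element of $\GL_n(\FF_q)$, and then to transfer fixed-point loci along that conjugation. First I would make explicit how the embedding depends on the auxiliary data. An embedding $\iota \colon T_\alpha \hookrightarrow \GL_n(\FF_q)$ in the sense of this subsection comes from an $\FF_q$-linear isomorphism $\phi \colon V_\alpha \xrightarrow{\sim} \FF_q^n$, via $\iota(t) = \phi \circ m_t \circ \phi^{-1}$, where $m_t$ denotes the $\FF_q$-linear endomorphism of $V_\alpha$ given by multiplication by the component tuple $t$. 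If $\phi_1, \phi_2$ are two such isomorphisms, then $g := \phi_2 \circ \phi_1^{-1}$ lies in $\GL_n(\FF_q)$ and the two resulting embeddings $\iota_1, \iota_2$ satisfy
\[
\iota_2(t) \;=\; g\, \iota_1(t)\, g^{-1} \qquad \text{for every } t \in T_\alpha.
\]

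Next I would use the right action of $\GL_n(\FF_q)$ on $G_k(n)$ to transport fixed points across this conjugation. For a fixed $t \in T_\alpha$, the assignment $V \mapsto Vg$ is a bijection $G_k(n) \to G_k(n)$, and the identity above translates $V \cdot \iota_2(t) = V$ into $(Vg) \cdot \iota_1(t) = Vg$. Thus $V \mapsto Vg$ restricts to a bijection between the fixed locus of $\iota_2(t)$ and that of $\iota_1(t)$, proving the desired equality of cardinalities.

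The only step with any genuine content is the observation that two different isomorphisms $V_\alpha \cong \FF_q^n$ always differ by an element of $\GL_n(\FF_q)$, so that they produce embeddings of $T_\alpha$ which are not just abstractly isomorphic but elementwise conjugate inside $\GL_n(\FF_q)$; once that is in hand, the conclusion is a formal consequence of the transitivity of conjugation on right $\GL_n(\FF_q)$-orbits in $G_k(n)$. The substantive work of actually identifying the image of $T_\alpha$ as a maximal torus of a specified $F$-conjugacy class, and of eventually computing the fixed-point counts $|\mathrm{Fix}(t)|$ on $G_k(n)$, is deferred to later results such as the cited Corollary~\ref{cor:action}.
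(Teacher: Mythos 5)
Your proof is correct and follows the same strategy as the paper's: observe that any two embeddings of $T_\alpha$ arising from isomorphisms $V_\alpha \cong \FF_q^n$ are pointwise conjugate in $\GL_n(\FF_q)$, and then translate fixed loci by the conjugating element. You have simply spelled out the details (constructing $g = \phi_2 \circ \phi_1^{-1}$ and the explicit bijection $V \mapsto Vg$) that the paper leaves implicit.
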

\begin{proof}
  Suppose we have two different injections $\phi,\psi:T_\alpha
  \hookrightarrow \GL_n(\FF_q)$, both coming from isomorphisms
  $V_\alpha\cong\FF_q^n$, as above. We see that $\phi(t)$ and
  $\psi(t)$ are conjugate in $\GL_n(\FF_q)$ for any $t\in T_\alpha$.
  The fixed sets of $\phi(t)$ are $\psi(t)$ are translates of each
  other, hence equicardinal.
\end{proof}

In the case that $\alpha=(1^n)$, the torus $T_\alpha$ is said to be
\emph{maximally split}. When we are in the case of a maximally split
torus acting on a Grassmannian or a partial flag variety our
analysis will be considerably simplified.

\subsection{Precise statement of the theorem}
We are now in a position to give the full statement of our main
theorem, where the notation is as in the previous subsections.

\begin{theorem}[\textit{c.f.} Reiner--Stanton \cite{ReinerStanton}]\label{thm:main}
  The triple
  \[
  \left( G_k(V_\alpha), \sum_{\lambda \subset (n-k)^k}
    \wt(\lambda;\alpha,k) ,\ T_\alpha \right)
  \]
  exhibits the cyclic sieving phenomenon.
\end{theorem}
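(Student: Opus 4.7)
The plan is to prove the CSP by computing both sides of the equality — the $c$-fixed-point count on $G_k(V_\alpha)$ and the evaluation of $\sum_\lambda \wt(\lambda;\alpha,k)$ at $t_r = \omega_r(c_r)$ — through a common decomposition by the profile $\beta = \beta(\lambda)$. On the polynomial side, the bijection~(\ref{eq:lambda}) immediately factorizes the sum:
\[
\sum_{\lambda \subseteq (n-k)^k} \wt(\lambda;\alpha,k) = \sum_\beta \Bigl(\prod_{r=1}^\ell W_{r,r}(\beta)\Bigr) \prod_{1 \leq r < s \leq \ell} W_{r,s}(\beta),
\]
where $W_{r,r}(\beta) := \sum_{\mu \subseteq (\alpha_r - \beta_r)^{\beta_r}} \wt(\mu; t_r, t_r)$ is exactly the Reiner--Stanton weight sum for the Grassmannian $G_{\beta_r}(\FF_{q^{\alpha_r}})$, and $W_{r,s}(\beta) := \wt\bigl((\alpha_s - \beta_s)^{\beta_r}; t_r, t_s\bigr)$ is a single product of $q$-numbers (since $\lambda^{r,s}$ is forced to be the full rectangle for $r < s$).

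Parallel to this, I would decompose $G_k(V_\alpha)$ according to a profile $\beta(V)$ recording how $V$ intersects the right-to-left flag $F_\bullet$ built from the factors $W_r := \FF_{q^{\alpha_r}}$. A subspace with profile $\beta$ is determined by two types of data: (i) for each $r$, a Schubert cell in $G_{\beta_r}(W_r)$ indexed by $\lambda^{r,r}$, giving a $\beta_r$-dimensional piece of the associated graded $W_r$; and (ii) off-diagonal data in an affine $\FF_q$-space of dimension $\beta_r(\alpha_s - \beta_s)$ for each $r < s$. Because $T_\alpha$ preserves $F_\bullet$, it preserves the profile, so the fixed-point count splits as a sum over $\beta$ of products of independent fixed-point counts, one for each (i)~or~(ii). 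The diagonal factors (i) are handled by applying the original Reiner--Stanton CSP~\cite{ReinerStanton} (the case $\alpha = (\alpha_r)$ of the theorem) to each single-block Grassmannian $G_{\beta_r}(\FF_{q^{\alpha_r}})$, matching $W_{r,r}(\beta)$ after specialization.

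The main obstacle is verifying the off-diagonal match: that $W_{r,s}(\beta)\bigm|_{t = c}$ equals the number of $c$-fixed elements of the affine $(r,s)$-parameter space. Each cell factor $[c_r^{q^{i+j}}, c_s^{q^{i+\beta_r}}]$ equals $q$ or $1$ according to whether the indicated Galois conjugates in $\overline{\FF_q}$ agree, so the whole evaluation is $q^{N(r,s)}$ for some count $N(r,s)$. The task is to identify $N(r,s)$ with the $\FF_q$-dimension of the $c$-fixed subspace of the off-diagonal parameter space, by tracking how multiplication by $c_r$ on the left and $c_s$ on the right acts on the $(i,j)$-entry of the $(r,s)$-block in a row-reduced representative. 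Pinning down the Frobenius exponents $q^{i+j}$ and $q^{i+\beta_r}$ in the weight as the twists forced on matrix entries by the row-reduction process — so that the equality of Galois conjugates encodes exactly the linear constraint for that entry to be $c$-fixed — is the technical heart of the argument. Once this per-cell identification is in hand, summing over $\beta$ and combining with the diagonal contribution yields the stated CSP.
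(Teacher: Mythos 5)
Your proposal follows the same skeleton as the paper's proof: stratify by the profile $\beta$, use the bijection~(\ref{eq:lambda}) to factor the polynomial into diagonal and off-diagonal contributions, invoke Reiner--Stanton's single-block result for the diagonal pieces, and count $c$-fixed off-diagonal data. That is exactly the paper's strategy. However, you have left two genuine holes, one of which you flag and one of which you do not.

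The hole you do not flag is structural and must come first. For a non-split torus element $u_r \in \FF_{q^{\alpha_r}}^\times$, multiplication on $V_\alpha$ is represented not by a diagonal matrix but by a block-diagonal matrix of companion blocks $U_r$ of size $d_r = [\FF_q[u_r]:\FF_q]$. Before one can speak of ``the $(i,j)$-entry of the $(r,s)$-block in a row-reduced representative'' scaling by $c_r$ and $c_s$, one needs the paper's Lemma~\ref{lem:structure}: a $u$-fixed subspace $Z$ has a row echelon form whose pivots organize into $I_{d_r}$ identity blocks, so that being $u$-fixed is equivalent to the matrix equation $U_r Z_{rs} = Z_{rs} U_s$ on each off-diagonal block. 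Your phrasing suggests an entry-by-entry scaling argument, which is only correct over the maximally split torus; for general $\alpha$ the constraint does not decouple into scalar equations over $\FF_q$ in the way you describe, so you cannot read off ``a linear constraint for that entry to be $c$-fixed'' cell by cell without first passing to this block form (or base-changing to $\overline{\FF}_q$, which you gesture at but do not carry out).

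The hole you do flag — identifying the number of solutions to $U_r Z_{rs} = Z_{rs} U_s$ with $q^{N(r,s)}$, where $N(r,s)$ is read off from the weight product — is precisely the content of the paper's Lemma~\ref{UpperBlock}, which the paper settles via the Cecioni--Frobenius theorem on the dimension of the solution space of $AX = XB$ in terms of invariant factors, together with a two-case analysis depending on whether the minimal polynomials of $u_r$ and $u_s$ coincide. You correctly identify that this is the crux, but stating that the task is to ``pin down the Frobenius exponents'' is not a proof; without a concrete mechanism such as Cecioni--Frobenius (or an explicit diagonalization over $\overline{\FF}_q$ verifying that the count of coincidences $u_r^{q^a} = u_s^{q^b}$ equals the $\FF_q$-dimension of the solution space), the argument does not close.
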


\begin{corollary}
  If $(t_1,\dots,t_n)$ is an element of a maximally split torus of
  $\GL_n(\FF_q)$ then it fixes exactly
  \[
  \sum_{\lambda \subset (n-k)^k} \prod_{\substack{1 \leq i \leq k\\
      1 \leq j \leq \lambda_i}}
  \frac{\omega(t_{k-\lambda_j'+j})^q-\omega(t_{n-k-\lambda_i + i})^q}
  {\omega(t_{k-\lambda_j'+j})-\omega(t_{n-k-\lambda_i + i})}
  \]
  elements in $G_k(n)$, where $\omega$ is an injection of groups
  $\FF_q^\times \hookrightarrow \CC^\times$.
\end{corollary}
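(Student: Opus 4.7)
The plan is to derive this as an immediate specialization of Theorem \ref{thm:main} to the composition $\alpha = (1^n)$, which is exactly the case when $T_\alpha = (\FF_q^\times)^n$ is the maximally split torus. The second example in Subsection \ref{sec:partition} already records the explicit formula
\[
\wt(\lambda; 1^n, k) = \prod_{\substack{1 \le i \le k \\ 1 \le j \le \lambda_i}} \bigl[\, t_{k - \lambda_j' + j},\ t_{n - k - \lambda_i + i}\,\bigr],
\]
so the weight side of the generating polynomial is already in the desired shape.

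Next, with an embedding $\omega : \FF_q^\times \hookrightarrow \CC^\times$ chosen for each of the $n$ cyclic factors $\FF_q^\times$, I would invoke the CSP part of Theorem \ref{thm:main}: the number of elements of $G_k(n)$ fixed by $(t_1,\ldots,t_n)$ equals the substitution $t_i \mapsto \omega(t_i)$ applied to $\sum_{\lambda \subset (n-k)^k} \wt(\lambda; 1^n, k)$. Expanding each occurrence of the $q$-number via the definition $[a,b] = (a^q - b^q)/(a - b)$ produces precisely the ratio in the corollary's displayed formula.

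The only point worth flagging is that whenever $\omega(t_r) = \omega(t_s)$, both the numerator and the denominator vanish; in that case the ratio should be read as the polynomial value $\sum_{i+j=q-1} a^i b^j = q$ recorded in Subsection \ref{sec:partition}. Apart from this purely notational convention, I do not anticipate any obstacle: once Theorem \ref{thm:main} is in hand, this corollary amounts to a direct translation of notation.
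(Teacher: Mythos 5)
Your proposal is correct and follows the paper exactly: the corollary is the specialization of Theorem~\ref{thm:main} to $\alpha = (1^n)$, and you unwind the weight formula from the example in Section~\ref{sec:partition} and the $q$-number notation just as intended. The clarifying remark about the $\omega(t_r) = \omega(t_s)$ case is a sensible notational point but does not change the substance.
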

\begin{proof}
  This is the special case of the theorem when $\alpha=(1^n)$.
\end{proof}
% {\sf JH: Again, check the notation for $\lambda$ and the labeling of
%   its cells. AB: I think this is fixed, but we can change this if
%   needed.}\vskip6pt
The theorem gives an effective method for computing certain complex
characters of $\GL_n(\FF_q)$ at elements that are semi-simple over
an algebraic closure of $\FF_q$. Let $\chi$ be the character of the
induction $\operatorname{Ind}_{P_k}^{\GL_n(\FF_q)} \CC$ of the
trivial representation $\CC$ of $P_k$, the parabolic subgroup of
block upper triangular matrices with two invertible diagonal blocks
of size $k$ and $n-k$. An element of $\GL_n(\FF_q)$ is semi-simple
over the algebraic closure of $\FF_q$ if and only if it occurs in
the image of some $T_\alpha$. A moment's thought reveals that the
computation in Theorem~\ref{thm:main} is equivalent to the
computation of $\chi$ at semi-simple elements.

This character can be computed using the classical formula for
induction. However, this formula is of little use to us since it
simply states the tautology: The induced character evaluated at $u$
is the number of fixed points $u$ on $\GL_n(\FF_q)/P_k=G_k(n)$. Our
theorem reduces the computation of this character to fewer than
\[
\binom{n}{k} \cdot k(n-k)
\]
evaluations of rational functions of the form $[a,b] =
(a^q-b^q)/(a-b)$ at roots of unity. This can be done using floating
point arithmetic, since all answers can be rounded to the nearest
integer.

\section{Proof of Theorem~\ref{thm:main}}\label{sec:proof}
We now begin to prove our main theorem. To do this we give a number
of preliminary lemmas. Once we have these in hand we will explicitly
count the number of subspaces fixed by a given torus element, using
the Cecioni--Frobenius theorem and a result of Reiner and Stanton.

As before, $\alpha$ is a composition of $n$ and
\begin{align*}
  V_\alpha &= \FF_{q^{\alpha_1}} \oplus \dots \oplus \FF_{q^{\alpha_\ell}},\\
  T_\alpha &=\FF_{q^{\alpha_1}}^\times \times \dots \times
  \FF_{q^{\alpha_\ell}}^\times.
\end{align*}
There is a right action of $T_\alpha$ on $G_k(n)$ that comes from
choosing an isomorphism $V_\alpha \cong \FF_q^n$, and hence an
embedding $T_\alpha \hookrightarrow \GL_n(\FF_q)$.

\begin{lemma}\label{companion}
  Let $u$ be an element in $\mathbb F_{q^n}^\times$ with minimal   polynomial $$x^d - a_{d-1} x^{d-1} - \dots - a_0.$$
  Then there exists a basis for $\FF_q^n$ such that the right action of $u$ on $G_k(n)$ is represented
  by the right multiplication by a block diagonal matrix
  $\operatorname{diag}(U,\ldots,U)$ in $\GL_n(\FF_q)$, where
  $$
  U=\begin{bmatrix}
    0 & 1 \\
    & 0 & 1 \\
    & & \cdots \\
    & & & 0 & 1 \\
    a_0 & a_1 & \cdots & & a_{d-1}
  \end{bmatrix}
  $$
  is the companion matrix of the minimal polynomial of $u$.
\end{lemma}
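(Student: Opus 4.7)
The plan is to view $\FF_q^n$ as $\FF_{q^n}$ via the chosen isomorphism, and to construct an explicit $\FF_q$-basis in which multiplication by $u$ has the desired block-diagonal companion structure. First I would note that since $u$ has $\FF_q$-minimal polynomial of degree $d$, the subfield $\FF_q(u) \cong \FF_{q^d}$ must sit inside $\FF_{q^n}$, which forces $d \mid n$; write $n = dm$.

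Next I would regard $\FF_{q^n}$ as an $\FF_{q^d}$-vector space of dimension $m$ and pick any $\FF_{q^d}$-basis $v_1, \ldots, v_m$. Because $\{1, u, u^2, \ldots, u^{d-1}\}$ is an $\FF_q$-basis of $\FF_{q^d}$, the collection $\{u^j v_i : 0 \leq j \leq d-1,\ 1 \leq i \leq m\}$ is an $\FF_q$-basis of $\FF_{q^n}$. I would order it into $m$ consecutive blocks of size $d$, the $i$-th block being $(v_i, uv_i, \ldots, u^{d-1}v_i)$, and transport it via $V_{(n)} \cong \FF_q^n$ to obtain the claimed basis of $\FF_q^n$.

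Multiplication by $u$ stabilizes each $\FF_{q^d}$-line $\FF_{q^d} v_i$, so the corresponding matrix is automatically block diagonal with $m$ blocks of size $d$. Within the $i$-th block, multiplication by $u$ sends $u^j v_i \mapsto u^{j+1} v_i$ for $0 \leq j \leq d-2$ and sends $u^{d-1} v_i \mapsto u^d v_i = a_0 v_i + a_1(u v_i) + \cdots + a_{d-1}(u^{d-1} v_i)$ by the minimal polynomial relation. Recording these images as the rows of a matrix acting on row vectors from the right yields precisely the companion matrix $U$ in each block.

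There is no serious obstacle here: this is essentially the rational canonical form for the cyclic $\FF_q[x]$-module $\FF_{q^n}$ with $x$ acting as multiplication by $u$, the cyclic decomposition being supplied by the $\FF_{q^d}$-basis $v_1,\ldots,v_m$. The one point requiring care is the row-versus-column convention, so that what appears is $U$ itself rather than its transpose; the ordering of each block as $(v_i, uv_i, \ldots, u^{d-1}v_i)$ combined with right multiplication on row vectors gives the matrix as displayed.
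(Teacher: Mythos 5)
Your proposal is correct and follows essentially the same route as the paper: both view $\FF_{q^n}$ as a free $\FF_q[u]=\FF_{q^d}$-module of rank $n/d$, take an $\FF_{q^d}$-basis $v_1,\ldots,v_{n/d}$, and pass to the $\FF_q$-basis $\{u^j v_i\}$ grouped into $d$-blocks, on each of which $u$ acts by the companion matrix. You spell out the companion-matrix computation and the row-vector convention a bit more explicitly than the paper (which simply cites Lang for the basis fact and asserts the form of $U$), but the argument is the same.
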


\begin{proof}
  Since there is a tower of fields $\FF_q \subseteq \FF_q[u] \subseteq
  \FF_{q^n}$, we see that $\FF_{q^n}$ is of degree $n/d$ over
  $\FF_q[u]$. Let $v_1,\dots,v_{n/d}$ denote an $\FF_q[u]$-basis for
  $\FF_{q^n}$. It follows from Lang \cite[Proposition V.1.2]{lang}
  that
  \[
  \{u^i v_j: 0 \leq i \leq d-1, 1 \leq j \leq n/d\}
  \]
  is a $\FF_q$-basis for $\FF_{q^n}$. There is now an isomorphism of
  $\FF_q[u]$-modules,
  \[
  \FF_{q^n} \cong (\FF_q[u])^{\oplus n/d},
  \]
  given by grouping the basis elements above by the index of $v$.  The
  matrix of $u$ acting on each copy of $\FF_q[u]$ is $U$, so we are done.
\end{proof}

\begin{corollary}\label{cor:action}
  Let $u = (u_1,\dots,u_\ell)$ be an element of $T_\alpha$.
  Then there exists a basis for $V_\alpha$ such that the action
  of $u$ on $V_\alpha$ is represented by the right
  multiplication by the block diagonal matrix
  \[
  \operatorname{diag}(U_1,\dots,U_1,U_2,\dots,U_2,\dots,U_\ell,\dots,U_\ell).
  \]
  Here, as before, $U_i$ is the companion matrix of the minimal
  polynomial of $u_i$ and each matrix $U_i$ is repeated
  $\left[\FF_{q^{\alpha_i}}:\FF_q[u_i]\right]$ times.
\end{corollary}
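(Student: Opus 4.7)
The plan is to reduce the corollary directly to Lemma~\ref{companion} by applying it one summand at a time. The key observation is that the action of $u = (u_1,\dots,u_\ell) \in T_\alpha$ on $V_\alpha = \FF_{q^{\alpha_1}} \oplus \cdots \oplus \FF_{q^{\alpha_\ell}}$ preserves the direct-sum decomposition: the $i$-th factor of $T_\alpha$ acts on the $i$-th summand $\FF_{q^{\alpha_i}}$ by multiplication by $u_i$ and trivially on the other summands. Consequently, if we choose an $\FF_q$-basis for $V_\alpha$ by concatenating bases of the individual summands, the matrix representing $u$ is automatically block diagonal with $\ell$ blocks, the $i$-th block representing multiplication by $u_i$ on $\FF_{q^{\alpha_i}}$.

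First I would fix an index $i$ and regard $u_i$ as an element of $\FF_{q^{\alpha_i}}^\times$. Applying Lemma~\ref{companion} with $n$ replaced by $\alpha_i$ and the element $u$ of the lemma replaced by $u_i$, I obtain a basis of $\FF_{q^{\alpha_i}}$ with respect to which multiplication by $u_i$ is given by the block diagonal matrix $\operatorname{diag}(U_i,\dots,U_i)$ with $\alpha_i / d_i = [\FF_{q^{\alpha_i}} : \FF_q[u_i]]$ copies of the companion matrix $U_i$ of the minimal polynomial of $u_i$. Then I would concatenate these bases across $i = 1,\dots,\ell$ (in order) to obtain an $\FF_q$-basis of $V_\alpha$, in which the matrix of $u$ is precisely the block diagonal matrix claimed in the statement.

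There is no real obstacle here; the corollary is essentially a direct consequence of Lemma~\ref{companion} together with the compatibility of the $T_\alpha$-action with the direct-sum decomposition of $V_\alpha$. The only thing worth verifying explicitly is this compatibility, which is immediate from the way the embedding $T_\alpha \hookrightarrow \GL_n(\FF_q)$ was defined in terms of the identification $V_\alpha \cong \FF_q^n$ and the componentwise multiplication on $V_\alpha$. The substantive work has already been placed in the lemma, where the cyclic $\FF_q[u_i]$-module structure on $\FF_{q^{\alpha_i}}$ is what produces the companion-matrix form.
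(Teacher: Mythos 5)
Your proposal is correct and takes the same route the paper intends: apply Lemma~\ref{companion} to each summand $\FF_{q^{\alpha_i}}$ with the element $u_i$, then concatenate the resulting bases, using the fact that $u$ acts componentwise on the direct sum $V_\alpha$. The paper states the corollary without any written proof, treating it as an immediate consequence of the lemma, and your argument is exactly the omitted justification.
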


If $Z \subset V_\alpha$ is a $k$-dimensional subspace, we obtain a
sequence of nonnegative integers
$\beta(Z)=(\beta_1,\ldots,\beta_\ell)$ by letting
\[
\beta_r = \dim \pi_r( \ker(\pi_{r-1}) \cap Z )
%\beta_1 + \dots + \beta_r = \dim_{\FF_q} \pi_r (Z)
\]
where $\pi_r$ is the projection
$V_\alpha=\bigoplus_{i=1}^\ell\FF_{q^{\alpha_i}}
\rightarrow \bigoplus_{i=1}^r \FF_{q^{\alpha_i}}$.

The numbers $\beta(Z)$ can be determined in coordinates quite
easily. Choose a basis for $V_\alpha$ which is a concatenation of
bases for $\FF_{q^{\alpha_\ell}},\ldots,\FF_{q^{\alpha_1}}$. Write
the subspace $Z$ as the row space of a $k$-by-$n$ matrix in row
echelon form with respect this basis, and let $\lambda$ be the
associated partition. Then $\alpha$ has partitioned the columns of
this matrix into blocks, labeled by $1,\ldots,\ell$ \emph{from right
to left}, and $\beta_r$ is the number of $1$'s in the $r$th block.
In other words, $\beta(Z)=\beta(\lambda)$ as in
Section~\ref{sec:partition}. It is a weak composition of $k$ into
$\ell$ parts, breaking the rows of a $k$-by-$n$ matrix into blocks,
labeled by $1,\ldots,\ell$ from top to bottom (some may be empty).
Therefore one can write $Z=[Z_{rs}]_{r,s=1}^\ell$ as a block matrix.

\begin{lemma}\label{lem:structure}
  Let $u=(u_1,\dots,u_\ell) \in T_\alpha$ with $[\FF_q[u_r]:\FF_q]=d_r$.
  Let $Z$ be a $k$-dimensional subspace of $V_\alpha$ with
  $\beta(Z) = (\beta_1,\dots,\beta_\ell)$. If $Z$ is fixed by $u$, then
  under the basis given in Corollary~\ref{cor:action}, $Z$ has row echelon
  form equal to a block matrix $[Z_{rs}]_{1 \leq r \leq s \leq \ell}$
  where each block $Z_{rs}$ has dimension $\beta_r$-by-$\alpha_s$ and
  $Z_{rs}=0$ whenever $r>s$. Furthermore, each anti-diagonal block
  $Z_{rr}$ is in a block row echelon form
   \[Z_{rr}=\begin{bmatrix}
 *\ldots* &    0   & \ldots &  0 & *\ldots* & 0 & *\ldots*  &I_{d_r} & 0\ldots0 \\
 *\ldots* &    0   & \ldots &   0 & *\ldots* & I_{d_r}             \\
 *\ldots* &    0   & \ldots & I_{d_r}                 \\
  \vdots  & \vdots \\
 *\ldots* & I_{d_r}
    \end{bmatrix}.
    \]
 Here the blocks $0$, $I_{d_r}$, and $*$ are all of size $d_r$-by-$d_r$.
\end{lemma}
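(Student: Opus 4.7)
The plan is to treat the block upper triangularity of $Z$ and the structure of each diagonal block $Z_{rr}$ separately; only the latter will use $u$-invariance. For $Z_{rs}=0$ when $r>s$, I would unpack the conventions of Section~\ref{sec:partition}: pivots in the reduced row echelon form of $Z$ lie in strictly decreasing column positions as the row index increases, and the rows whose pivot lies in block $r$ form a contiguous interval of $\beta_r$ rows. Since blocks $s<r$ sit east of block $r$, every entry of such a row in a block $s<r$ lies east of the pivot and must vanish.

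For $Z_{rr}$, the key observation is that $T_\alpha$ preserves the direct sum decomposition $V_\alpha = \bigoplus_{s=1}^\ell \FF_{q^{\alpha_s}}$, so each $u_s$ acts on $\FF_{q^{\alpha_s}}$ alone and $u$ commutes with every projection $\pi_s$. Consequently the subspace
\[
W_r := \pi_r(\ker(\pi_{r-1}) \cap Z) \subset \FF_{q^{\alpha_r}}
\]
is $u_r$-stable, i.e., an $\FF_q[u_r]$-subspace; a direct matrix inspection (rows with pivots in block $s>r$ vanish on block $r$) shows that $W_r$ equals the row span of $Z_{rr}$, so $\dim_{\FF_q} W_r = \beta_r$. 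Since $\FF_q[u_r] \cong \FF_{q^{d_r}}$ sits as a subfield of $\FF_{q^{\alpha_r}}$, the latter is a free $\FF_q[u_r]$-module of rank $m_r = \alpha_r/d_r$, and the basis from Corollary~\ref{cor:action} exhibits this by grouping basis vectors into $m_r$ \emph{super-columns} of size $d_r$, each an $\FF_q[u_r]$-component.

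I would then pick an $\FF_q[u_r]$-reduced row echelon basis of $W_r$, which has $\beta_r/d_r$ rows (in particular $d_r \mid \beta_r$), each carrying a super-pivot $1 \in \FF_q[u_r]$ in a distinct super-column and zeros east of that super-pivot. Expanding each $\FF_q[u_r]$-basis vector $w$ to the $\FF_q$-tuple $w, u_r w, \ldots, u_r^{d_r-1}w$ yields an $\FF_q$-basis of $W_r$; the companion-matrix action converts a super-pivot $1$ into the $d_r \times d_r$ identity block $I_{d_r}$ and a zero coefficient into a zero block, producing exactly the displayed form of $Z_{rr}$. The main obstacle will be bookkeeping between the two column orderings --- the overall right-to-left labelling of the $\alpha$-blocks versus the internal $\{1, u_r, \ldots, u_r^{d_r-1}\}$-ordering within each super-column --- and verifying that the expanded $\FF_q$-matrix is already in $\FF_q$-reduced row echelon form, and not merely row-equivalent to it.
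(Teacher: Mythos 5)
Your proposal is essentially the paper's proof: both hinge on showing that $W_r := \pi_r(\ker(\pi_{r-1}) \cap Z)$ is $u_r$-stable (hence an $\FF_q[u_r]$-submodule of $\FF_{q^{\alpha_r}}$), then writing $W_r$ in block row echelon form over $\FF_q[u_r]$ so that pivot blocks become $I_{d_r}$ via the companion-matrix identification. The only cosmetic difference is that you derive the vanishing of $Z_{rs}$ for $r>s$ and the identification $W_r = \operatorname{rowspan}(Z_{rr})$ directly from the echelon-form conventions, whereas the paper assembles the block-triangular matrix from the $W_r$'s and then performs a final block row reduction; the bookkeeping concern you flag at the end (matching the internal $\{1,u_r,\dots,u_r^{d_r-1}\}$ ordering with the global column ordering) is real but is resolved by exactly that final row-reduction step, which the paper invokes and which preserves the $I_{d_r}$ pivot blocks because rows in block $r$ vanish in column blocks $s<r$.
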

\begin{proof}
  Let $W_r \subseteq \FF_{q^{\alpha_r}}$ be defined as
  \[
  W_r=\pi_r( \ker(\pi_{r-1}) \cap Z ) \subseteq \FF_{q^{\alpha_r}},
  \]
  where $\pi_r$ is the natural projection $V_\alpha \to
  \bigoplus_{i=1}^r \FF_{q^{\alpha_i}}$. By definition, $W_r$ is a
  $\beta_r$-dimensional subspace of $\FF_{q^{\alpha_r}}$.

  We have $W_r u_r = W_r$, since $Z$ being fixed by $u$ implies
  \[
  \pi_r( \ker(\pi_{r-1}) \cap Z )u_r = \pi_r( (\ker(\pi_{r-1}) \cap
  Z)u ) = \pi_r( (\ker(\pi_{r-1}) \cap Z) ).
  \]
  Thus $W_r$ is an $\FF_q[u_r]$-submodule of $\FF_{q^{\alpha_r}}$.
  Using a basis of $\FF_{q^{\alpha_r}}$ furnished by the isomorphism
  (see the proof of Lemma~\ref{companion})
  \[
  \FF_{q^{\alpha_r}} \cong (\FF_q[u_r])^{\oplus \alpha_r/d_r},
  \]
  we write $W_r$ as a block matrix in row echelon form.
  Each pivot block must be an identity matrix, since
  $\FF_q[u_r]$ is irreducible as a module over itself.

  Doing the above procedure for each $r$ proves that $Z$ can be
  written as a block triangular matrix $[Z_{rs}]$ where the
  anti-diagonal blocks have the desired form. Then one can do
  block row reduction using anti-diagonal blocks to put the
  entire matrix $[Z_{rs}]$ in row echelon form.
\end{proof}

\begin{remark}
The $*$'s in $Z_{rr}$ are $d_r$-by-$d_r$ matrices with entries in
$\FF_q$ that satisfy the equation $U_rX=XU_r$, $U_r$ being the
companion matrix of the minimal polynomial of $u_r$. The solutions
to this equation bijectively correspond to elements in
$\FF_{q^{d_r}}=\FF_q[u_r]$.
\end{remark}

\begin{lemma}[Cecioni--Frobenius]
  Let $A$ and $B$ be matrices of size $a$-by-$a$ and $b$-by-$b$ with
  entries in a field $\FF$. The $\FF$-dimension of the space of solutions
  of the equation $AX=XB$, for $X$ a matrix of size $a$-by-$b$, is given by the sum
  \[
  \sum_{i,j} \deg \gcd( d_i(A),d_j(B) ),
  \]
  where $d_i(A)$ is the $i$th invariant factor of $\lambda I_a-A$
  over $\mathbb F[\lambda]$, and similarly for $d_j(B)$.
\end{lemma}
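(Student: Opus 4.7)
The plan is to translate the matrix equation $AX=XB$ into a statement about $\mathbb{F}[\lambda]$-module homomorphisms, and then apply the structure theorem together with a standard Hom computation over the PID $\mathbb{F}[\lambda]$.

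First I would endow $\mathbb{F}^a$ with the structure of an $\mathbb{F}[\lambda]$-module $M_A$ in which $\lambda$ acts as $A$, and similarly define $M_B$ using $B$. Viewing the $a$-by-$b$ matrix $X$ as an $\mathbb{F}$-linear map $\mathbb{F}^b\to\mathbb{F}^a$, the relation $AX=XB$ is exactly the statement that $X(Bv)=A(Xv)$ for every $v$, i.e. that $X$ is $\mathbb{F}[\lambda]$-linear from $M_B$ to $M_A$. Thus the solution space is naturally identified with $\mathrm{Hom}_{\mathbb{F}[\lambda]}(M_B,M_A)$ as an $\mathbb{F}$-vector space, and its $\mathbb{F}$-dimension is what we have to compute.

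Next, by the structure theorem for finitely generated modules over the PID $\mathbb{F}[\lambda]$, the invariant factors of $\lambda I_a-A$ and $\lambda I_b-B$ give isomorphisms
\[
M_A\;\cong\;\bigoplus_i \mathbb{F}[\lambda]/(d_i(A)),\qquad M_B\;\cong\;\bigoplus_j \mathbb{F}[\lambda]/(d_j(B)).
\]
Since Hom distributes over finite direct sums in each variable, the problem reduces to computing, for any two nonzero polynomials $f,g\in\mathbb{F}[\lambda]$,
\[
\dim_{\mathbb{F}}\mathrm{Hom}_{\mathbb{F}[\lambda]}\bigl(\mathbb{F}[\lambda]/(f),\;\mathbb{F}[\lambda]/(g)\bigr).
\]

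For the final step I would carry out the classical gcd computation: an $\mathbb{F}[\lambda]$-homomorphism $\mathbb{F}[\lambda]/(f)\to\mathbb{F}[\lambda]/(g)$ is determined by the image $\bar r$ of $1$, and the constraint is $fr\equiv 0\pmod g$. Writing $d=\gcd(f,g)$, $f=df'$, $g=dg'$ with $\gcd(f',g')=1$, this condition is equivalent to $g'\mid r$, so the admissible $\bar r$ form the ideal $(g')/(g)$ inside $\mathbb{F}[\lambda]/(g)$, an $\mathbb{F}$-vector space of dimension $\deg g-\deg g'=\deg d$. Summing over all pairs $(i,j)$ yields the claimed formula. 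The main obstacle is essentially bookkeeping: checking that the direct sum decomposition of $\mathrm{Hom}$ matches the pairing over $(i,j)$ used in the statement, and performing the Hom-into-cyclic-module calculation correctly over $\mathbb{F}[\lambda]$; there are no deeper difficulties, since everything reduces to the PID structure theorem and a one-line gcd argument.
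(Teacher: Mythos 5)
Your argument is correct and complete. The paper itself does not prove this lemma at all --- it simply cites MacDuffee, \emph{The Theory of Matrices}, Theorem 46.3 --- so there is no in-text proof to compare yours against. The route you take, identifying the solution space of $AX=XB$ with $\mathrm{Hom}_{\mathbb{F}[\lambda]}(M_B,M_A)$, decomposing $M_A$ and $M_B$ into cyclic summands via the invariant factors of $\lambda I-A$ and $\lambda I-B$, distributing $\mathrm{Hom}$ over finite direct sums, and computing $\dim_{\mathbb{F}}\mathrm{Hom}_{\mathbb{F}[\lambda]}\bigl(\mathbb{F}[\lambda]/(f),\mathbb{F}[\lambda]/(g)\bigr)=\deg\gcd(f,g)$, is the standard modern proof of the Cecioni--Frobenius theorem. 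The gcd step is right: with $d=\gcd(f,g)$, $f=df'$, $g=dg'$, $\gcd(f',g')=1$, the condition $g\mid fr$ reduces to $g'\mid r$, and the ideal $(g')/(g)\subset\mathbb{F}[\lambda]/(g)$ is isomorphic to $\mathbb{F}[\lambda]/(d)$, of $\mathbb{F}$-dimension $\deg d$. This is a clean, self-contained replacement for the paper's bare citation, at the modest cost of invoking the structure theorem for finitely generated modules over a PID where the classical sources proceed by explicit matrix manipulations with rational canonical forms.
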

\begin{proof}
  This is a classical, if somewhat unknown result. See
  \cite[Theorem~46.3]{AX=XB}.
\end{proof}

\begin{lemma}\label{UpperBlock}
  For $i=1,2$, let $u_i$ be an element in $\mathbb F_{q^{\alpha_i}}$
  with $\mathbb F_q[u_i]=\mathbb F_{q^{d_i}}$, let $U_i$
  be the companion matrix of the minimal polynomial of $u_i$,
  and let $m_i$ be a positive integer divisible by $d_i$.
  Then the number of $m_1$-by-$m_2$ matrices $X$ with entries
  in $\FF_q$ that satisfy
  \begin{equation}\label{U1X=XU2}
  {\rm diag}(U_1,\ldots,U_1)X=X{\rm diag}(U_2,\ldots,U_2)
  \end{equation}
  is equal to
  \[
  \prod_{j=1}^{m_1}\prod_{i=1}^{m_2}
  \left[\omega_1(u_1)^{q^{i+j}},\omega_2(u_2)^{q^{i+m_1}}\right]
  \]
  where $\omega_i:\FF_{q^{\alpha_i}}^\times\hookrightarrow \CC^\times$
  is a fixed injection of groups, $i=1,2$.
\end{lemma}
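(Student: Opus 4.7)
The plan is to compute the number $N$ of solutions of equation~(\ref{U1X=XU2}) via Cecioni--Frobenius after reducing to a single block, then verify by direct telescoping that the right-hand side product $P$ equals $N$. Set $a:=\omega_1(u_1)$, $b:=\omega_2(u_2)$, and let $p_r$ denote the minimal polynomial of $u_r$ over $\FF_q$, of degree $d_r$; since $u_r\in\FF_{q^{d_r}}$ we have $a^{q^{d_1}}=a$ and $b^{q^{d_2}}=b$.

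First, because $d_i\mid m_i$, partitioning $X$ into an $(m_1/d_1)\times(m_2/d_2)$ array of $d_1\times d_2$ blocks decouples~(\ref{U1X=XU2}) into $(m_1/d_1)(m_2/d_2)$ independent copies of $U_1Y=YU_2$. Each $U_i$ is a $d_i\times d_i$ companion matrix, so $\lambda I_{d_i}-U_i$ has a unique nontrivial invariant factor equal to $p_i$; Cecioni--Frobenius then gives $\dim_{\FF_q}\{Y:U_1Y=YU_2\}=\deg\gcd(p_1,p_2)$, which is $d_1$ when $p_1=p_2$ (forcing $d_1=d_2$) and $0$ otherwise. Hence $N=q^{m_1m_2/d_1}$ if $p_1=p_2$, and $N=1$ otherwise.

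For the product $P:=\prod_{j=1}^{m_1}\prod_{i=1}^{m_2}[a^{q^{i+j}},b^{q^{i+m_1}}]$, I write each bracket as $(c^q-d^q)/(c-d)$ and observe $(a^{q^{i+j}})^q=a^{q^{i+j+1}}$ (and analogously for $b$). For fixed $j$ with no diagonal bracket, the inner product over $i$ telescopes to $(a^{q^{m_2+j+1}}-b^{q^{m_1+m_2+1}})/(a^{q^{j+1}}-b^{q^{m_1+1}})$; using $b^{q^{m_2}}=b$ (by $d_2\mid m_2$) this simplifies to $D_{j+m_2}/D_j$ with $D_j:=a^{q^{j+1}}-b^{q^{m_1+1}}$. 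Since $a^{q^{d_1}}=a$, the sequence $(D_j)$ is $d_1$-periodic, and $d_1\mid m_1$; thus whenever no $D_j$ vanishes, $\prod_{j=1}^{m_1}D_{j+m_2}=\prod_{j=1}^{m_1}D_j$ and $P=1$. When $p_1\ne p_2$, the Galois orbits of $a$ and $b$ in $\CC^\times$ (under the standard choice of compatible embeddings $\omega_r$ arising from a fixed embedding of $\overline{\FF_q}^\times$) are disjoint, no $D_j$ vanishes, and $P=1=N$.

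When $p_1=p_2$, one has $d_1=d_2\mid m_2$ and $b=a^{q^{k_0}}$ for some $k_0\in\{0,\ldots,d_1-1\}$, and $D_j$ vanishes precisely when $j\equiv k_0\pmod{d_1}$, so the telescoping is formally $0/0$. I split the outer product according to $j\bmod d_1$: a bracket $[a^{q^{i+j}},b^{q^{i+m_1}}]$ is diagonal iff $j\equiv k_0\pmod{d_1}$, a condition independent of $i$, so for each fixed $j$ either all $m_2$ inner brackets are diagonal or none are. For the $m_1-m_1/d_1$ non-diagonal $j$'s, the earlier telescoping is valid and gives $D_{j+m_2}/D_j=1$ (using $d_1\mid m_2$). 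For each of the $m_1/d_1$ diagonal $j$'s, every bracket equals $q\cdot(a^{q^{i+j}})^{q-1}$, and the total $a$-exponent over $i\in[1,m_2]$ is $(q-1)\sum_i q^{i+j}=q^{j+1}(q^{m_2}-1)$, a multiple of $\mathrm{ord}(a)\mid q^{d_1}-1$ (since $d_1\mid m_2$), so the $a$-factors trivialize and the inner product equals $q^{m_2}$. Multiplying over the $m_1/d_1$ diagonal $j$'s yields $P=q^{m_1m_2/d_1}=N$. The main obstacle is this last subcase: the $0/0$ forces one to separate diagonal from non-diagonal $j$'s, evaluate the diagonal brackets via the polynomial identity $[c,c]=qc^{q-1}$, and leverage $d_1\mid m_2$ twice---once to kill the stray $a$-factors in the diagonal contributions, and once to make the non-diagonal shift-by-$m_2$ trivial.
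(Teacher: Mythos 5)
Your proof is correct and follows essentially the same route as the paper's: both count the solutions via the block decomposition and Cecioni--Frobenius, split into the cases $p_1\neq p_2$ and $p_1=p_2$, and evaluate the product by telescoping using the periodicities $a^{q^{d_1}}=a$ and $b^{q^{d_2}}=b$. The only differences are organizational --- the paper first collapses the $m_1\times m_2$ product to a single $d_1\times d_2$ block and invokes the identity $\prod_{i=0}^{d-1}[t_1^{q^{i+j}},t_2^{q^i}]_q=[t_1^{q^j},t_2]_{q^d}$, whereas you telescope over the full range of $i$ and evaluate the diagonal brackets directly via $[c,c]=qc^{q-1}$; your explicit caveat that $\omega_1,\omega_2$ must be chosen compatibly is a point the paper leaves implicit.
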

We extend our previous notation and let $[a,b]_q = (a^q-b^q)/(a-b)$,
since we occasionally use the notation $[a,b]_{q^2}$, etc..
\begin{proof}
Write $X$ as a block matrix $[X_{rs}]$ where each block $X_{rs}$ has
dimension $d_1$-by-$d_2$. Then Equation (\ref{U1X=XU2}) is
equivalent to $U_1X_{rs}=X_{rs}U_2$ for all $X_{rs}$.

Take an arbitrary block $X_{rs}$ and let $z$ its the bottom-left
entry. It follows from $\omega_i(u_i)^{d_i}=\omega_i(u_i)$, $i=1,2$,
that
\begin{eqnarray*}
\prod_{x\in X_{rs}}
\left[\omega_1(u_1)^{q^{i(x)+j(x)}},\omega_2(u_2)^{q^{i(x)+m_1}}\right]
&=&\prod_{j=0}^{d_1-1} \prod_{i=0}^{d_2-1}
\left[\omega_1(u_1)^{q^{i(z)+j(z)+i+j}},\omega_2(u_2)^{q^{i(z)+i}}\right]\\
&=&\prod_{j=0}^{d_1-1} \prod_{i=0}^{d_2-1}
\left[\omega_1(u_1)^{q^{i+j}},\omega_2(u_2)^{q^i}\right]
\end{eqnarray*}
where $i(x),j(x),i(z),j(z)$ are all taken in the $m_1$-by-$m_2$
rectangle. Thus it suffices to show that the last product above is
equal to the number of solutions to a single equation
$U_1X_{rs}=X_{rs}U_2$.

  Let $f_i$ be the minimal polynomial over $\FF_q$ of $u_i$ for
  $i=1,2$.  Since $f_i$ is irreducible over $\FF_q$, the invariant
  factors of $\lambda I_{d_i}-U_i$ are $1,\dots,1,f_i$ for $i=1,2$.
  The roots of $f_i$ are $u_i,u_i^q,\ldots,u_i^{q^{d_i-1}}$,
  which are distinct and form the orbit of $u_i$ under the Frobenius
  automorphism $u \mapsto u^q$.

  Suppose that $f_1\neq f_2$. Then $u_1^{q^i} \neq u_2^{q^j}$
  for all $i,j$. Hence
  \begin{eqnarray*}
  \prod_{j=0}^{d_1-1} \prod_{i=0}^{d_2-1}
  \left[\omega_1(u_1)^{q^{i+j}},\omega_2(u_2)^{q^{i}}\right]
  &=& \prod_{j=0}^{d_1-1} \prod_{i=0}^{d_2-1}
  \frac{\omega_1(u_1)^{q^{i+j+1}}-\omega_2(u_2)^{q^{i+1}}}
  {\omega_1(u_1)^{q^{i+j}}-\omega_2(u_2)^{q^{i}}}\\
  &=& \frac{ \prod_{j=0}^{d_1-1} \prod_{i=1}^{d_2}
  \left(\omega_1(u_1)^{q^{i+j}}-\omega_2(u_2)^{q^{i}}\right)}
  { \prod_{j=0}^{d_1-1} \prod_{i=0}^{d_2-1}
  \left(\omega_1(u_1)^{q^{i+j}}-\omega_2(u_2)^{q^{i}}\right)}\\
  &=&1.
  \end{eqnarray*}
  Here again the last equality follows from
  $\omega_i(u_i)^{d_i}=\omega_i(u_i)$. Thus, our product formula
  predicts that the number of solutions $N$ is $1$, which agrees with
  Cecioni--Frobenius. % Verified a bunch of examples as a
                      % failsafe. This doesn't need any hypothesis on
                      % b.

  Now assume $f_1=f_2=f$, which implies $d_1=d_2=d$. Since $u_1$ and
  $u_2$ are both roots of $f$, there exists a unique $k$ such that
  $0\leq k\leq d-1$ and $u_1^{q^k}=u_2$. Hence, if $0\leq j\leq d-1$ then
  \[
  \left[\omega_1(u_1)^{q^j},\omega(u_2)\right]_{q^d}=
  \left\{\begin{array}{ll}
      1, & j\ne k,\\
      q^d, & j=k.
    \end{array}\right.
  \]
  One also sees from the definition that
  \[
  \prod_{i=0}^{d-1}\left[t_1^{q^{i+j}},t_2^{q^{i}}\right]_q
  =\left[t_1^{q^j},t_2\right]_{q^d}.
  \]
  We predict that,
  \[
  N=\prod_{i,j=0}^{d-1}\left[\omega(u_1)^{q^{i+j}}, \omega(u_2)^{q^{i}}\right]_q
    =\prod_{j=0}^{d-1}\left[\omega_1(u_1)^{q^j},\omega(u_2)
    \right]_{q^d} =q^d.
  \]
  This agrees with Cecioni--Frobenius, which gives the dimension
  of the solution space of $U_1 X_{rs}= X_{rs} U_2$ to be $d$.
\end{proof}

We are finally in a good position to prove our theorem. Recall that
we have fixed injections $\omega_r : \FF_{q^{\alpha_r}}^\times
\hookrightarrow \CC^\times$ of groups throughout this discussion.
\begin{proof}[Proof of Theorem~\ref{thm:main}]
  Let $\beta=(\beta_1,\ldots,\beta_\ell)$ be a weak composition of $k$
  with $\beta_r\leq \alpha_r$ for $r=1,\ldots,\ell$. It suffices to
  show that the number of subspaces $Z$ that are fixed by an element
  $u=(u_1,\ldots,u_\ell)$ in $T_\alpha$ and have $\beta(Z)=\beta$
  is equal to
  \[
  \sum_{\lambda:\beta(\lambda)=\beta} \wt(\lambda;\alpha,k)|_{t_r =
    \omega_r(u_r)}.
  \]

  Let $Z$ be such a subspace, and let $[Z_{rs}]_{1 \leq r \leq s \leq \ell}$
  be the row echelon form of $Z$ given by Lemma~\ref{lem:structure}. Using
  Corollary~\ref{cor:action} one has
  \[
  Zu=[Z_{rs}]\cdot \operatorname{diag}(U_1,\dots,U_1,U_2,\dots,U_2,\dots ,
  U_\ell,\dots U_\ell)
  \]
  where each $U_r$ appears $\alpha_r/d_r$ times.
  Since the pivots in $[Z_{rs}]$ form identity blocks $I_{d_r}$, one
  can bring $Zu$ back to a row echelon form by left-multiplying by
  \[
  \operatorname{diag}(U_1,\ldots,U_1,\dots,U_2,\dots,U_2,\ldots,U_\ell,\ldots,U_\ell)^{-1},
  \]
  where each $U_r$ appears $\beta_r/d_r$ times, and the result must
  agree with the row-echelon form of $Z$. Using
  Lemma~\ref{UpperBlock} one sees that the number of
  solutions for $Z_{rs}$, $r<s$, is given by
  \begin{equation}\label{eq:upper}
  \wt(\lambda^{r,s};\omega_r(u_r),\omega_s(u_s))
  =\prod_{j=0}^{\beta_r-1}\prod_{i=0}^{\alpha_s-\beta_s-1}
    [\omega_r(u_r)^{q^{i+j}},\omega_s(u_s)^{q^{i+\beta_r}}].
  \end{equation}
  where $\lambda^{r,s}$ is a $\beta_r$-by-$(\alpha_s-\beta_s)$
  rectangle.

  The number of solutions for the anti-diagonal block $Z_{rr}$
  to be fixed by $u_r$ is the number of $\beta_r$-dimensional
  $\FF_q$-subspaces of  $\FF_{q^{\alpha_r}}$ fixed by $u_r$,
  which was computed by Reiner and Stanton to be an evaluation
  of their ($q,t$)-binomial coefficient. They proved in their
  \cite[Theorem~5.2]{ReinerStanton} that this number is
  \begin{equation}
    \label{eq:diagonal}
    \sum_{\lambda^{r,r} \subset (\alpha_r - \beta_r)^{\beta_r}}
    \wt(\lambda^{r,r};\omega_r(u_r),\omega_r(u_r)).
  \end{equation}

  Combining equations~\eqref{eq:upper} and \eqref{eq:diagonal}
  together with the bijection (\ref{eq:lambda}) shows
  that the total number of subspaces $Z$ with $\beta(Z)=\beta$ that
  are left fixed by $u$ is
  \[
  \sum_{\lambda: \beta(\lambda)=\beta} \wt(\lambda;\alpha,k)|_{t_r =
    \omega_r(u_r)}.
  \]
  This is what we needed to show.
\end{proof}

\begin{corollary}\label{cor:main}
Let $C_\lambda$ be the Schubert cell of $Gr_k(V_\alpha)$ indexed by
$\lambda\vdash n$. For any weak composition
$\beta=(\beta_1,\ldots,\beta_\ell)$ of $k$ with
$\beta_r\leq\alpha_r$, $1\leq r\leq \ell$, the triple
\[
\left( \bigcup_{\lambda:\beta(\lambda)=\beta} C_\lambda,\
\sum_{\lambda:\beta(\lambda)=\beta} \wt(\lambda;\alpha,k),\ T_\alpha
\right)
\]
exhibits the cyclic sieving phenomenon. In addition, the polynomial
$\sum_{\lambda:\beta(\lambda)=\beta} \wt(\lambda;\alpha,k)$ factors as
\[
 \left(\prod_{r=1}^\ell \prod_{i=0}^{\beta_r-1}
   \frac{t_r^{q^{\alpha_r}} - t_r^{q^i}}{t_r^{q^{\beta_r}} - t_r^{q^i}}\right)
 \left(\prod_{1 \leq r< s \leq \ell}\prod_{j=0}^{\beta_r-1}\prod_{i=0}^{\alpha_s-\beta_s-1}
    [t_r^{q^{i+j}},t_s^{q^{i+\beta_r}}]\right).
\]
\end{corollary}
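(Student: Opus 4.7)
The plan is to extract both claims from the machinery already assembled in the proof of Theorem~\ref{thm:main}. For the cyclic sieving statement, the key observation is that the map $Z\mapsto\beta(Z)$ is $T_\alpha$-equivariant, so $\bigcup_{\lambda:\beta(\lambda)=\beta}C_\lambda$ is a $T_\alpha$-stable subset of $G_k(V_\alpha)$. This is because the kernel of each projection $\pi_r$ is a sum of the standard $T_\alpha$-eigenspaces, so $(\ker\pi_{r-1})\cap Zu=((\ker\pi_{r-1})\cap Z)u$, and since each $u_r$ is invertible it preserves the dimensions of the projections that define $\beta_r$. Given this equivariance, the proof of Theorem~\ref{thm:main} already shows, one $\beta$ at a time, that
\[
\#\{Z:\beta(Z)=\beta,\ Zu=Z\}=\sum_{\lambda:\beta(\lambda)=\beta}\wt(\lambda;\alpha,k)\big|_{t_r=\omega_r(u_r)},
\]
which is precisely the CSP condition for the triple in the statement.

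For the factorization, I would apply the bijection \eqref{eq:lambda}. Directly from the definition of the weight one has
\[
\wt(\lambda;\alpha,k)=\prod_{1\leq r\leq s\leq\ell}\wt(\lambda^{r,s};t_r,t_s),
\]
and since $\lambda^{r,s}$ is forced to be the full rectangle $(\alpha_s-\beta_s)^{\beta_r}$ for every $r<s$, summing over $[\lambda^{r,s}]$ factors as
\[
\sum_{\lambda:\beta(\lambda)=\beta}\wt(\lambda;\alpha,k)=\prod_{1\leq r<s\leq\ell}\wt\bigl((\alpha_s-\beta_s)^{\beta_r};t_r,t_s\bigr)\cdot\prod_{r=1}^\ell\sum_{\lambda^{r,r}\subset(\alpha_r-\beta_r)^{\beta_r}}\wt(\lambda^{r,r};t_r,t_r).
\]
The off-diagonal factor is already a product of brackets by the definition of $\wt$, yielding exactly the second parenthesized product in the statement. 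For each diagonal factor I would cite \cite[Theorem~5.2]{ReinerStanton}, which identifies the inner sum with the Reiner--Stanton $(q,t)$-binomial coefficient and provides its closed form $\prod_{i=0}^{\beta_r-1}(t_r^{q^{\alpha_r}}-t_r^{q^i})/(t_r^{q^{\beta_r}}-t_r^{q^i})$. Taking the product over $r$ gives the first parenthesized product.

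The only step that is not immediate bookkeeping is the $T_\alpha$-equivariance of the $\beta$-stratification at the outset, which is a short computation requiring nothing more than the invertibility of each $u_r$ on $\FF_{q^{\alpha_r}}$. I do not anticipate any serious obstacle: the off-diagonal pieces of the weight sit in product form by construction, and the diagonal pieces are imported wholesale from Reiner and Stanton.
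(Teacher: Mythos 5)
Your proposal is correct and follows essentially the same route as the paper: both establish $T_\alpha$-stability of the $\beta$-stratum via the identity $\pi_r(\ker(\pi_{r-1})\cap Zu)=\pi_r(\ker(\pi_{r-1})\cap Z)u$, observe that the proof of Theorem~\ref{thm:main} already counts fixed points one $\beta$ at a time, and obtain the factorization by splitting the weight into off-diagonal rectangle pieces (already in bracket-product form, Equation~\eqref{eq:upper}) and diagonal pieces given by the Reiner--Stanton $(q,t)$-binomial product formula. No discrepancies to report.
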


\begin{proof}
One can refine the action of $T_\alpha$ to
$\bigcup_{\beta(\lambda)=\beta} C_\lambda$ since
\[
\pi_r(\ker(\pi_{r-1})\cap Zu)=\pi_r(\ker(\pi_{r-1})\cap Z)u.
\]
The proof of Theorem~\ref{thm:main} shows precisely the cyclic
sieving phenomenon for this refined action. The second assertion
follows from the product formulation of the ($q,t$)-binomial
coefficient (Reiner--Stanton \cite[p.1]{ReinerStanton}) and
Equation~\eqref{eq:upper}.
\end{proof}

\begin{remark}
It follows the above corollary that
\begin{eqnarray*}
\qbinom{n}{k}{q}&=&
\lim_{t_1,\ldots,t_\ell\to1}\sum_{\beta_1+\cdots\beta_\ell=k}
\sum_{\lambda:\beta(\lambda)=\beta}\wt(\lambda;\alpha,k)\\
&=& \sum_{\beta_1+\cdots\beta_\ell=k} \prod_{r=1}^\ell
\qbinom{\alpha_r}{\beta_r}{q}\,
 \prod_{1 \leq r< s \leq \ell} q^{\beta_r(\alpha_s-\beta_s)}.
\end{eqnarray*}
When $\ell=2$ this gives the well-known $q$-Vandermonde identity.
\end{remark}

\section{Partial Flag Varieties}\label{sec:flags}
In this section we generalize the previous results to the partial
flag varieties. We do this at the cost of some repetition, as all of
our previous results are subsumed in the forthcoming pages. We find
this to be pedagogically sound since the proofs presented by
themselves would be opaque without the presentation of the
Grassmannians as ``warm-up'' cases.

We start by giving the relevant definitions, and then consider the
two extreme cases of the $(1^n)$ and $(n)$ tori, $(\FF_q^\times)^n$
and $\FF_{q^n}^\times$. Following this we define the polynomials
which gives the CSP for the partial flag varieties and prove our
main theorem.

\subsection{Partial flag varieties and Schubert decomposition}
Our discussion begins with some geometry and combinatorics of
partial flag varieties.

Let $\beta=(\beta_1,\ldots,\beta_m)$ be a (weak) composition of
$n$. The {\it partial flag variety} of type $\beta$ is
\[
\Fl(\beta)=\left\{ 0\subset V_1\subset V_2\subset\cdots\subset V_m=V:
  \dim(V_i)=\beta_1+\cdots+\beta_i \right\},
\]
where $V$ is a $n$-dimensional vector space over $\FF_q$. We will
usually take $V=V_\alpha$, as in Section~\ref{sec:proof}.

The \emph{parabolic subgroup} (or \textit{Young subgroup})
$W_\beta=\SS_\beta$ of the symmetric group $W=\SS_n$ is the direct
product $\mathfrak S_{\beta_1}\times\cdots\times\mathfrak
S_{\beta_m}$ where $\mathfrak S_{\beta_i}$ is the permutation group
on the integers
$$\beta_1+\cdots+\beta_{i-1}+1,\ldots,\beta_1+\cdots+\beta_i.$$
Written in one-line notation, the permutations $w=w(1)w(2)\ldots
w(n)$ in $\SS_n$ are naturally partitioned by $\beta$ into blocks.
Any coset $w\SS_\beta$ can be represented by the element with the
minimal length, {\it i.e.}, the element obtained from $w$ by sorting
every block of $w$ into increasing order. For example, if
$\beta=(4,2,2)$ and $w=5268|73|14$ then the minimal coset
representative of $w\SS_\beta$ is $2568|37|14$. Let
$W^\beta=W/W_\beta$ be the set of all these minimal coset
representatives.

Let $0\subset V_1\subset V_2\subset\cdots\subset V_m$ be a flag in
$\Fl(\beta)$ represented by an $n$-by-$n$ matrix $F$ whose first
$\beta_1+\cdots+\beta_i$ rows span $V_i$ for all $i=1,\ldots,m$.
Then there exists a unique permutation $w\in W^\beta$ such that $F$
can be reduced by row operations fixing the partial flag to a
\emph{row echelon form} $[a_{ij}]_{i,j=1}^n$ with
\begin{itemize}
\item $a_{ij}=1$, called a \emph{pivot}, if $j=w(i)$,
\item $a_{ij}$ is arbitrary if $i<w^{-1}(j)$, $w(i)>j$,
\item $a_{ij}=0$ otherwise.
\end{itemize}
In other words, $F$ is obtained from the permutation matrix of $w$
by replacing those zeros in the positions corresponding to the
inversions of $w$ with arbitrary numbers in $\FF_q$.
% We identity the partial flag with this matrix.
The \textit{Schubert cell} $C_w$ indexed by $w \in W^\beta$ consists
of those flags whose associated permutation is $w$.

For instance, if $\beta=(1,2,2)$ and $w=23514$, then $C_w$ consists
of all partial flags that can be represented in the form
\[
\left[\begin{tabular}{ccccc}
$*$ & $1$ & $0$ & $0$ & $0$ \\
\hdashline
$*$ & $0$ & $1$ & $0$ & $0$ \\
$*$ & $0$ & $0$ & $*$  & $1$ \\
\hdashline
$1$ & $0$ & $0$ & $0$ & $0$ \\
$0$ & $0$ & $0$ & $1$ & $0$
\end{tabular}\right].
\]
Here the dashed lines indicate which rows span subspaces of the associated partial flag. We shall omit
them if they are clear from context. We refer to Fulton \cite[\S
10.2]{Fulton} for the geometry of the complete flag variety.
% Most of what Fulton does holds over the integers, so we'll drop a
% reference to the field here.
\subsection{The $1^n$-torus action on $\Fl(\beta)$}
The torus $T_{1^n}$ acts on $\Fl(\beta)$ by rescaling the columns of a
matrix representing a flag. For example, if $F$ is a flag as in the
above example, then
\[
F=\left[\begin{tabular}{ccccc}
$a$ & $1$ & $0$ & $0$ & $0$ \\
%\hdashline
$b$ & $0$ & $1$ & $0$ & $0$ \\
$c$ & $0$ & $0$ & $d$  & $1$ \\
%\hdashline
$1$ & $0$ & $0$ & $0$ & $0$ \\
$0$ & $0$ & $0$ & $1$ & $0$
\end{tabular}\right]
\xrightarrow{u} F\cdot u = \left[\begin{tabular}{ccccc}
$au_1$ & $u_2$ & $0$ & $0$ & $0$ \\
%\hdashline
$bu_1$ & $0$ & $u_3$ & $0$ & $0$ \\
$cu_1$ & $0$ & $0$ & $du_4$  & $u_5$ \\
%\hdashline
$u_1$ & $0$ & $0$ & $0$ & $0$ \\
$0$ & $0$ & $0$ & $u_4$ & $0$
\end{tabular}\right].
\]
Applying row operations that do not effect $F\cdot u$ we obtain
\[
\left[\begin{tabular}{ccccc}
$u_2^{-1}au_1$ & $1$ & $0$ & $0$ & $0$ \\
%\hdashline
$u_3^{-1}bu_1$ & $0$ & $1$ & $0$ & $0$ \\
$u_5^{-1}cu_1$ & $0$ & $0$ & $u_5^{-1}du_4$  & $1$ \\
%\hdashline
$1$ & $0$ & $0$ & $0$ & $0$ \\
$0$ & $0$ & $0$ & $1$ & $0$
\end{tabular}\right].
\]
By the uniqueness of the row echelon form, $F$ is fixed by $u$ if
and only if
\[
u_2a=au_1,\ u_3b=bu_1,\ u_5c=cu_1,\ u_5d=du_4.
\]
The number of solutions to these equations is given by
\[
[t_2,t_1][t_3,t_1][t_5,t_1][t_5,t_4]|_{t_i=\omega(u_i)}
\]
where $\omega:\FF_q^\times\hookrightarrow \CC^\times$ is a fixed
injection of groups.

One can easily extend this example to the action of $1^n$-torus on the
partial flag variety $\Fl(\beta)$ for all compositions $\beta$ of $n$
and show that
\[
\left( \Fl(\beta),\ X_{1^n,\beta}(t),\
(\mathbb F_q^\times)^{\times n} \right)
\]
exhibits the cyclic sieving phenomenon, where
$$
X_{1^n,\beta}(t)=\sum_{w\in W^\beta} \prod_{(i,j)\in{\rm Inv}(w)}
[t_i,t_j].
$$
Here ${\rm Inv}(w)=\{(i,j):i<j,w(i)>w(j)\}$ is the usual set of
inversions of $w$.

\subsection{The $n$-torus action on $\Fl(\beta)$}\label{sec:n torus on flags}
Reiner, Stanton, and White \cite{RSW} considered the cyclic action
of $\mathbb F_{q^n}^\times$ on the partial flag variety
$\Fl(\beta)$. They observed that a flag $0\subset
V_1\subset\cdots\subset V_m=V$ in $F(\beta,\FF_q)$ is fixed by an
element $u$ in $\FF_{q^n}^\times$ if and only if all $V_i$ are
$\FF_{q^d}$-spaces, where $d=[\FF_q[u]:\FF_q]$. On the other hand,
they defined the \emph{$(q,t)$-multinomial coefficient}
\[
\qbinom{n}{\beta}{q,t}:=\frac{n!_{q,t}}
{\beta_1!_{q,t}\cdot\beta_2!_{q,t^q}\cdot\beta_3!_{q,t^{q^2}}\cdots},
\]
where
$n!_{q,t}=(1-t^{q^n-1})(1-t^{q^n-q})\cdots(1-t^{q^n-q^{n-1}})$, and
showed that
\[
\qbinom{n}{\beta}{q,\omega(u)}=\qbinom{n/d}{\beta/d}{q^d},
\]
where $\beta/d=(\beta_1/d,\beta_2/d,\ldots)$ and $\omega:
\FF_{q^n}^\times\hookrightarrow \CC^\times$ is an injection of
groups. Hence
\[
\left( \Fl(\beta),\ X_{n,\beta}(t),\ \mathbb F_{q^n}^\times \right)
\]
exhibits the cyclic sieving phenomenon, with
$X_{n,\beta}(t)=\qbinom{n}{\beta}{q,t}$.

Reiner and Stanton \cite[Section 8]{ReinerStanton} defined a weight
function that allows one to write the $(q,t)$-multinomial
coefficient as
\begin{equation}\label{qt}
\qbinom{n}{\beta}{q,t}= \sum_{w\in W^\beta} \wt(w;t).
\end{equation}
They used a recurrence relation to define the weight, which was later
shown by Hivert and Reiner \cite{HivertReiner} to take the form
\[
\wt(w;t)=\prod_{(i,j)\in {\rm Inv}(w)} \wt((i,j);t),
\]
for some weights associated to the inversions of $w$, which we now
define.

Given a word $w=w_1\ldots w_\ell$, recursively define a labeled tree
by taking the smallest letter $w_s$ of $w$ as the root and attaching
to it the trees obtained from the subwords $w_1\ldots w_{s-1}$ and
$w_{s+1}\ldots w_\ell$ as left and right subtrees. For instance, the
tree associated to $w=385216479$ is
\[
\xymatrix @R=5pt @C=5pt {
   &    & 1 \ar@{-}[dl] \ar@{-}[dr] \\
   & 2 \ar@{-} [dl] & & 4 \ar @{-}[dl] \ar @{-}[dr] \\
3 &    & 6   & & 7 \ar @{-}[dr]\\
   & 5 \ar @{-}[ul] & & & & 9 \\
8 \ar @{-}[ur] }
\]
For any inversion $(i,j)$ of $w$, find the smallest $w(k)$ with
$i\leq k\leq j$. In the tree of $w$, $w(k)$ is the join (lowest
common parent) of $w(i)$ and $w(j)$. Let $\ell$ (resp. $r$) be the
set of all vertices in the left (resp. right) subtree of $w(k)$
whose label is at least $w(i)$ (resp. at most $w(j)$). Then
\[
{\rm wt}((i,j);t)=[k-1+r,k-1+r-\ell].
\]
Here the notation is $[a,b]=[t^{q^a},t^{q^b}]_q$.

The weights of the inversions of $w=385216479$ are given in the
following table.
\[
\begin{matrix}
(w(i),w(j)) &  k & \ell & r & {\rm wt} \\
\hline
(2,1) & 5 & 4 & 0 & [4,0] \\
(3,1) & 5 & 3 & 0 & [4,1] \\
(5,1) & 5 & 2 & 0 & [4,2] \\
(8,1) & 5 & 1 & 0 & [4,3] \\
(3,2) & 4 & 3 & 0 & [3,0] \\
(5,2) & 4 & 2 & 0 & [3,1] \\
(8,2) & 4 & 1 & 0 & [3,2]
\end{matrix}
\qquad
\begin{matrix}
(w(i),w(j)) &  k & \ell & r & {\rm wt} \\ \hline
(5,4) & 5 & 2 & 1 & [5,3] \\
(6,4) & 7 & 1 & 0 & [6,5] \\
(8,4) & 5 & 1 & 1 & [5,4] \\
(8,5) & 3 & 1 & 0 & [2,1] \\
(8,6) & 5 & 1 & 2 & [6,5] \\
(8,7) & 5 & 1 & 3 & [7,6] \\
\null
\end{matrix}
\]

Reiner and Stanton~\cite{ReinerStanton} also observed that
\[
\lim_{t\to 1}\qbinom{n}{\beta}{q,t}=\qbinom{n}{\beta}{q},
\]
\begin{equation}\label{q->1}
\lim_{q\to 1}\qbinom{n}{\beta}{q,t^{\frac{1}{q-1}}}
=\qbinom{n}{\beta}{t}.
\end{equation}

\subsection{Statement of the main result}
Let $\alpha$ and $\beta$ be compositions of $n$. The action of the
torus $T_\alpha$ on $V=V_\alpha$ induces an action on the partial
flag variety $\Fl(\beta)$. Our goal in this subsection is to define
a multivariate polynomial $X_{\alpha,\beta}(\tt)$ so that the triple
\[
\left( \Fl(\beta),\ X_{\alpha,\beta}(t),\ T_\alpha \right)
\]
exhibits the cyclic sieving phenomenon.

The compositions $\alpha$ and $\beta$ give set partitions of $[n]$
as in
$$
A_r=\{\alpha_1+\cdots+\alpha_{r-1}+1,\ldots,\alpha_1+\cdots+\alpha_r\}
$$
for $r=1,\ldots,\ell$, and likewise define $B_1,\ldots,B_m$ from
$\beta$. These will allow us to break up a permutation $w \in W^\beta$
into ``sub-permutations'', as we did with partitions in the
Grassmannian case.

Let $C_w$ be a Schubert cell of $\Fl(\beta)$ represented by its
associated row-echelon form $F=[a_{ij}]_{i,j=1}^n$ of $0$'s, $1$'s
and $*$'s. Define $F_{rs}$ to be the submatrix of $F$ with column
indices in $A_s$ and row indices in $w^{-1}(A_r)$, for $1\leq
r,s\leq \ell$.

If $F$ represents the partial flag $0\subset V_1\subset
\cdots\subset V_m=V_\alpha$, and
\[
\pi_s:
V_\alpha=\FF_{q^{\alpha_1}}\oplus\cdots\oplus\FF_{q^{\alpha_\ell}}
\rightarrow
\FF_{q^{\alpha_s}}\oplus\cdots\oplus\FF_{q^{\alpha_\ell}}.
\]
is the projection map, $1\leq s\leq \ell$, then define weak
compositions $\beta^{(s)}(F)$ by
\[
\beta^{(s)}_k(F)=\dim \pi_s(\ker(\pi_{s+1})\cap V_k)), 1\leq k\leq
m.
\]
This is equivalent to
\[
\beta_k^{(s)}(F)=|\{ i : i \in B_k, w(i) \in A_s \} |.
\]
One sees that $\beta^{(1)}(F),\ldots,\beta^{(\ell)}(F)$ are weak
compositions of $\alpha_1,\ldots,\alpha_\ell$, respectively, and
their component-wise sum is $\beta$, {\it i.e.}
\[
\beta^{(1)}_k(F)+\cdots+\beta^{(\ell)}_k(F)=\beta_k, \quad
k=1,\ldots,m.
\]

Since the definition depends only on the Schubert cell $C_w$ that
contains $F$, we can write $\beta^{(s)}(w)=\beta^{(s)}(F)$.
%%% An equivalent definition is
%%% \[ \beta^{(s)}(w)_k=|\{i\in B_k:w(i)\in A_s\}|. \]
The matrix $[\beta^{(s)}(w)_k]_{k,s}$ indexes the double coset
$\SS_\alpha w\SS_\beta$.

Each ``diagonal'' submatrix $F_{ss}$ determines a permutation $w_s$
in $\SS_{\alpha_s}/\SS_{\beta^{(s)}(w)}$. Conversely, given
permutations $w_s$ in $\SS_{\alpha_s}/\SS_{\beta^{(s)}(w)}$, $1\leq
s\leq \ell$, one can recover the permutation $w$ in $\SS_n$ in a
unique way.

\begin{example}
Let $\alpha=(4,4)$, $\beta=(1,3,4)$, and $w=53461278$. Then $C_w$ is
represented by
\[
F=\left[\begin{array}{cccc|cccc}
*&*&*&*&1& & & \\ \hdashline
*&*&1& & & & & \\
*&*& &1& & & & \\
*&*& & & &1& & \\ \hdashline
1& & & & & & & \\
 &1& & & & & & \\
 & & & & & &1& \\
 & & & & & & &1\\
  \end{array}\right].
  \]
It is divided by $\alpha$ into submatrices
\[
F_{11}=\left[\begin{array}{cccc}\hdashline
*&*&1&  \\
*&*& &1 \\
\hdashline
1& & & \\
 &1& &\\
  \end{array}\right],\quad
F_{12}=0,\quad
F_{21}=\left[\begin{array}{cccc}
*&*&*&* \\
\hdashline
*&*& &  \\
\hdashline
 & & & \\
 & & &\\
  \end{array}\right],\quad
F_{22}=\left[\begin{array}{cccc}
1& & & \\ \hdashline
 &1& & \\ \hdashline
 & &1& \\
 & & &1\\
  \end{array}\right].
\]
One sees that $\beta^{(1)}(w)=(0,2,2)$, $\beta^{(2)}(w)=(1,1,2)$,
$w_1=3412$, $w_2=5678$.
\end{example}

\begin{lemma}\label{BlockForm}
\noindent (a) If $r<s$ then $F_{rs}=0$.

\noindent (b) If $r=s$ then $F_{ss}$ is the row echelon form for the
Schubert cell $C_{w_s}$ of $\Fl(\beta^{(s)}(w))$.

\noindent (c) If $r>s$ then $F_{rs}$ contains only stars and zeros.

\noindent (d) Let $a_{ij}$ be a star in $F$ that falls into some
$F_{rs}$ with $r>s$. If $i\in B_k$, then $a_{i'j}$ is also a star
for all $i'\in B_k$ with $w(i')\in A_r$; if $w^{-1}(j)\in B_k$ then
$a_{ij'}$ is a star for all $j'\in A_s$ with $w^{-1}(j')\in B_k$.
\end{lemma}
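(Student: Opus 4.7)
The proof will rest on the characterization that an entry $a_{ij}$ of the row-echelon form $F=[a_{ij}]$ is a star precisely when $i<w^{-1}(j)$ and $w(i)>j$, i.e., when $(i,w^{-1}(j))$ is an inversion of $w$, together with two elementary facts: every element of $A_r$ is smaller than every element of $A_s$ whenever $r<s$, and $w$ is increasing on each block $B_k$ because $w\in W^\beta$.

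Parts (a) and (c) fall out immediately. For (a), an entry $(i,j)\in F_{rs}$ with $r<s$ has $w(i)\in A_r$ and $j\in A_s$, so $w(i)<j$, which rules out both the pivot condition $j=w(i)$ and the star condition $w(i)>j$. For (c), $r>s$ gives $w(i)>j$, so no pivot can appear in $F_{rs}$ and every entry is either a star or a zero.

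For (b), I would introduce the order-preserving bijections $\rho_r\colon [\alpha_s]\to w^{-1}(A_s)$ and $\rho_c\colon [\alpha_s]\to A_s$ and set $w_s=\rho_c^{-1}\circ w\circ \rho_r\in\SS_{\alpha_s}$. The row-block structure on $[\alpha_s]$ inherited from $\beta$ has part sizes $\beta^{(s)}(w)$ by definition of the latter. Because $w$ is increasing on each $B_k$, $w_s$ is increasing on each block of $\beta^{(s)}(w)$, so $w_s$ is the minimal coset representative in $\SS_{\alpha_s}/\SS_{\beta^{(s)}(w)}$. Translating the conditions $i<w^{-1}(j)$ and $w(i)>j$ through $\rho_r$ and $\rho_c$ yields precisely the analogous star condition for $w_s$, and the pivot positions match as well, so $F_{ss}$ is exactly the row-echelon form of $C_{w_s}$ in $\Fl(\beta^{(s)}(w))$.

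Part (d) requires one extra observation: if $a_{ij}$ is a star in some $F_{rs}$ with $r>s$, then $i$ and $w^{-1}(j)$ cannot lie in the same block $B_k$, since on a single $B_k$ the permutation $w$ is increasing and $i<w^{-1}(j)$ would then force $w(i)<j$, contradicting $w(i)>j$. Assuming $i\in B_k$, this forces $w^{-1}(j)\notin B_k$; since $B_k$ is a contiguous interval and $i\in B_k$ with $i<w^{-1}(j)$, every index in $B_k$ is strictly less than $w^{-1}(j)$, so for any $i'\in B_k$ with $w(i')\in A_r$ one has $i'<w^{-1}(j)$ and $w(i')>j$ (the latter because $r>s$), hence $a_{i'j}$ is a star. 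The second assertion in (d) follows by the symmetric argument, with $w^{-1}(j)\in B_k$ forcing $i\notin B_k$ and contiguity of $B_k$ placing every index of $B_k$ strictly above $i$. I do not anticipate a substantive obstacle; step (b) is the most notationally involved, but it is still a routine translation through the bijections $\rho_r$ and $\rho_c$.
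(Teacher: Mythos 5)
Your proof is correct and follows essentially the same route as the paper's: (a) and (c) from the star/pivot characterization $i<w^{-1}(j)$, $w(i)>j$ combined with the ordering of the blocks $A_r$, and (d) from the observation that $i$ and $w^{-1}(j)$ cannot share a block $B_k$ (since $w\in W^\beta$ is increasing on blocks) together with the contiguity of $B_k$. Your treatment of (b) via the order-preserving relabelings is a correct expansion of what the paper dismisses as ``a moment's thought.''
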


\begin{proof}
  By the definition, $a_{i,w(i)}$ is a $1$ for $i=1,\ldots,n$,
  $a_{ij}$ is a star, whenever $i<w^{-1}(j)$ and $w(i)>j$, and
  $a_{ij}$ is a $0$ otherwise. This at once yields (a) and (c) and a
  moment's thought gives (b).

  % If $a_{ij}$ is a star that falls in some $F_{rs}$, then $w(i)$
  % lies in $A_r$, $j$ lies in $A_s$, and $w(i)>j$ forces $r\geq
  % s$. Thus (a) holds, {\it i.e.} $F_{rs}=0$ if $r<s$.  One can
  % easily see that (b) holds.
  % If $a_{ij}$ is a $1$, then $j=w(i)$ implies $r=s$. Thus (c) holds.

Finally, to prove (d), let $a_{ij}$ be a star that falls in some
$F_{rs}$ with $r>s$, {\it i.e.} $i<w^{-1}(j)$, $w(i)>j$, $w(i)\in
A_r$, $j\in A_s$.

Suppose that $i$ and $i'$ are both in $B_k$ for some $k$, and
$w(i')$ is in $A_r$. If $w^{-1}(j)\in B_k$, then $i\in B_k$,
$i<w^{-1}(j)$, and $w(i)>j$ give a contradiction to $w\in W^d$.
Hence $w^{-1}(j)\notin B_k$, and then $i<w^{-1}(j)$ implies
$i'<w^{-1}(j)$. Since $w(i),w(i')\in A_r$, $j\in A_s$, and $w(i)>j$,
one also has $w(i')>j$. Therefore $a_{i'j}$ is a star.

Similarly, if $w^{-1}(j)$ and $w^{-1}(j')$ are both in $B_k$ for
some $k$, and $j'$ is in $A_s$, then $i\notin B_k$ and $i<w^{-1}(j)$
imply $i<w^{-1}(j')$, and $w(i)>j$ implies $w(i)>j'$. Thus $a_{ij'}$
is a star.
\end{proof}

It follows from (d) that each $F_{rs}$, $r>s$, consists of one
$\beta^{(s)}_b$-by-$\beta^{(r)}_a$ rectangle of stars for all pairs
$(a,b)$ with $1\leq a<b\leq m$, and zeros in the remaining spots.
Define its weight to be
\[
\wt(F_{rs};t_r,t_s):=\prod_{1\leq a<b\leq m}
\prod_{i=1}^{\beta^{(r)}_a}\prod_{j=1}^{\beta^{(s)}_b}
\,\left[t_r^{i+j},t_s^{i+\beta^{(s)}_b}\right].
\]
In the previous example, $F_{21}$ contains three $1$-by-$2$
rectangles of stars, so its weight is
\[
\wt(F_{21};t_2,t_1)=\left([t_2,t_1^q][t_2^q,t_1^{q^2}]\right)^3.
\]
Let $\wt(F_{ss};t_s,t_s):=\wt(w_s;t_s)$ as in (\ref{qt}), $1\leq
s\leq \ell$. Define the \emph{weight of a Schubert cell $C_w$} to be
\[
\wt(w;\alpha):=\prod_{1\leq s\leq r\leq \ell}\wt(F_{rs};t_r,t_s).
\]
This weight does not depend on $\beta$. Finally define
\[
X_{\alpha,\beta}(\tt):=\sum_{w\in W^\beta} \wt(w;\alpha).
\]

Now we can state the main result on flag varieties.

\begin{theorem}\label{thm:flags}
  The triple $(\Fl(\beta),\ X_{\alpha,\beta}(\tt),\ T_\alpha)$ exhibits the
    cyclic sieving phenomenon.
\end{theorem}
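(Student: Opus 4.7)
My plan is to adapt the argument for Theorem~\ref{thm:main} to the flag setting. Fix $u=(u_1,\dots,u_\ell)\in T_\alpha$, let $d_r=[\FF_q[u_r]:\FF_q]$, and let $\omega_r$ be the chosen embedding $\FF_{q^{\alpha_r}}^\times\hookrightarrow\CC^\times$. Since $\Fl(\beta)=\bigsqcup_{w\in W^\beta}C_w$ and $X_{\alpha,\beta}(\tt)=\sum_{w}\wt(w;\alpha)$, the goal is to match $|\Fl(\beta)^u|$ with the specialization of $X_{\alpha,\beta}(\tt)$ at $t_r=\omega_r(u_r)$.

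A routine analog of Corollary~\ref{cor:main} shows that the double-coset data $\beta^{(*)}(F)=(\beta^{(1)}(F),\dots,\beta^{(\ell)}(F))$ is $T_\alpha$-invariant, since the intermediate spaces $\pi_s(\ker(\pi_{s+1})\cap V_k)$ are intrinsic. So I may fix a tuple $\beta^{(*)}$ of weak compositions of $\alpha_1,\dots,\alpha_\ell$ with componentwise sum $\beta$, and count fixed flags with prescribed $\beta^{(*)}(F)=\beta^{(*)}$.

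Using the basis for $V_\alpha$ furnished by Corollary~\ref{cor:action}, $u$ acts on the right as the block diagonal matrix $D_u$ whose $A_s$-column block is $\operatorname{diag}(U_s,\dots,U_s)$ with $\alpha_s/d_s$ copies of $U_s$. Lemma~\ref{BlockForm} expresses the row-echelon form of any $F\in C_w$ with $\beta^{(*)}(w)=\beta^{(*)}$ as a block matrix $[F_{rs}]$, where $F_{rs}=0$ for $r<s$, each diagonal $F_{ss}$ is a row-echelon form for $C_{w_s}$ in $\Fl(\beta^{(s)})$ over $\FF_{q^{\alpha_s}}$, and each lower block $F_{rs}$ with $r>s$ is a disjoint union of $\beta^{(r)}_a$-by-$\beta^{(s)}_b$ rectangles of free $\ast$'s (for pairs $a<b$) in a matrix of zeros. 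I then compute $Fu$ blockwise as $F_{rs}\cdot U_s^{\oplus}$ and observe that restoring row-echelon form is accomplished by left-multiplying on row block $w^{-1}(A_r)$ by $(U_r^{\oplus})^{-1}$; this is a flag-preserving row operation precisely when each $d_r$ divides each $\beta^{(r)}_k$, in which case the $U_r$-chunks each lie inside a single $B_k$. Under this divisibility, the condition that $F$ is fixed by $u$ decouples into independent matrix equations $U_r^{\oplus}F_{rs}=F_{rs}U_s^{\oplus}$, one per $r\ge s$.

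For the diagonal blocks ($r=s$), the equation says exactly that $F_{ss}$ represents a flag in $\Fl(\beta^{(s)})$ of $\FF_{q^{\alpha_s}}$ fixed by $u_s$; summed over $w_s\in\SS_{\alpha_s}^{\beta^{(s)}}$, Reiner--Stanton's CSP recalled in Section~\ref{sec:n torus on flags}, together with the Hivert--Reiner cell-refinement of~\eqref{qt}, yields $\sum_{w_s}\wt(w_s;t_s)\bigl|_{t_s=\omega_s(u_s)}$. For the off-diagonal blocks ($r>s$), Lemma~\ref{BlockForm}(d) guarantees that the equation decouples across the individual star-rectangles, each of which is an instance of Lemma~\ref{UpperBlock}; multiplying those counts gives $\wt(F_{rs};t_r,t_s)\bigl|_{t=\omega(u)}$. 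Since the block-equations are independent once $\beta^{(*)}$ is fixed, the total count for a fixed $\beta^{(*)}$ factors, and summing over $\beta^{(*)}$ and over $(w_1,\dots,w_\ell)$ recovers $\sum_{w\in W^\beta}\wt(w;\alpha)\bigl|_{t=\omega(u)}=X_{\alpha,\beta}(\tt)\bigl|_{t=\omega(u)}$.

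The main obstacle is the bookkeeping in the third paragraph: verifying that the row reduction restoring $Fu$ to echelon form can be carried out by row operations within each $B_k$ alone, showing that the divisibilities $d_r\mid \beta^{(r)}_k$ are forced (with both sides vanishing when any fails), and checking that the star-rectangles of Lemma~\ref{BlockForm}(d) truly decouple as independent instances of Lemma~\ref{UpperBlock}. With those details carefully tracked, the two external ingredients---Reiner--Stanton on the diagonal and Lemma~\ref{UpperBlock} off-diagonal---complete the argument.
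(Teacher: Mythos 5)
Your proposal follows essentially the same route as the paper's proof: fix the double-coset data $\beta^{(1)},\dots,\beta^{(\ell)}$, use the basis of Corollary~\ref{cor:action} together with the block structure from Lemma~\ref{BlockForm} to decouple the fixed-point condition into diagonal equations (counted by Reiner--Stanton's $(q,t)$-multinomial via~\eqref{qt}) and off-diagonal equations (counted by Lemma~\ref{UpperBlock}), then multiply and sum. The bookkeeping you flag in your final paragraph---that the restoring row operations respect the flag and that the pivots group into $I_{d_r}$ blocks---is exactly what the paper delegates to Lemma~\ref{lem:FlagStructure}, its flag analogue of Lemma~\ref{lem:structure}, so nothing essential is missing from your outline.
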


\subsection{Proof of the main result}

The reader can check that the following is a straightforward
generalization of Lemma~\ref{lem:structure} to flag varieties.

\begin{lemma}\label{lem:FlagStructure}
Let $u=(u_1,\ldots,u_\ell)$ be an element in $T_\alpha$ with
$[\FF_q[u_s]:\FF_q]=d_s$, $s=1,\ldots,\ell$, and let $F$ be a flag
in $\Fl(\beta,\FF_q)$ fixed by $u$. Then under the basis for
$V_\alpha$ given in Corollary~\ref{cor:action}, $F$ has row echelon
form $[F_{rs}]_{r,s=1}^\ell$ in which the pivots form block matrices
equal to $I_{d_s}$.
\end{lemma}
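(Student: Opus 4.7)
The plan is to mimic the proof of Lemma~\ref{lem:structure} one ``level'' at a time, applying that argument simultaneously to every subspace $V_k$ in the flag and then reconciling the results with the definition of the row echelon form of a flag. The only genuinely new point is that we must handle a whole chain $V_1\subset\cdots\subset V_m$ of $u$-stable subspaces instead of a single one, and show that the pivots align consistently across the flag.

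For each index $s\in\{1,\ldots,\ell\}$ and each $k\in\{1,\ldots,m\}$, I would set
\[
W_k^{(s)} := \pi_s(\ker(\pi_{s+1})\cap V_k)\subseteq \FF_{q^{\alpha_s}},
\]
so that $\dim_{\FF_q} W_k^{(s)} = \beta^{(s)}_k(F)$ by the very definition of $\beta^{(s)}(F)$, and $W_1^{(s)}\subseteq W_2^{(s)}\subseteq \cdots \subseteq W_m^{(s)}$. Because $u$ acts block-diagonally on $V_\alpha$ with $u_s$ on the $s$-th summand, both $\pi_s$ and $\ker(\pi_{s+1})$ are $u$-equivariant, so $V_k\cdot u=V_k$ forces $W_k^{(s)} u_s=W_k^{(s)}$. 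Hence every $W_k^{(s)}$ is an $\FF_q[u_s]$-submodule of $\FF_{q^{\alpha_s}}\cong(\FF_q[u_s])^{\oplus \alpha_s/d_s}$, and $W_1^{(s)}\subseteq\cdots\subseteq W_m^{(s)}$ is an $\FF_q[u_s]$-flag.

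Viewing $\FF_q[u_s]=\FF_{q^{d_s}}$ as a field, each flag $W_\bullet^{(s)}$ is a flag of $\FF_{q^{d_s}}$-subspaces in an $\FF_{q^{d_s}}$-vector space of dimension $\alpha_s/d_s$, so it has a unique reduced row echelon form over $\FF_{q^{d_s}}$; writing this matrix in the $\FF_q$-basis coming from the isomorphism in Corollary~\ref{cor:action} turns every pivot $1$ over $\FF_{q^{d_s}}$ into a $d_s$-by-$d_s$ identity block $I_{d_s}$, since $\FF_q[u_s]$ is simple as a module over itself. This pins down the diagonal block $F_{ss}$ of $F$ to have pivots $I_{d_s}$ in exactly the pattern described.

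Finally, I would assemble the full flag: the strict upper blocks $F_{rs}$ with $r<s$ vanish because rows indexed by $w^{-1}(A_r)$ have all nonzero entries in columns $A_s$ with $s\le r$ (this is immediate from the defining shape of the flag row echelon form, exactly as in Lemma~\ref{BlockForm}(a)), while the strict lower blocks with $r>s$ can be left arbitrary of the prescribed shape, since block row operations using the identity pivots $I_{d_s}$ of $F_{ss}$ reduce every entry to zero below and above those pivots without disturbing them and without changing any $V_k$. The main obstacle, and the only place where the flag structure (not just a single subspace) really enters, is verifying that a single basis change from Corollary~\ref{cor:action} simultaneously puts \emph{every} $W_k^{(s)}$ into the form with $I_{d_s}$-pivots; this is handled by the observation that nested $\FF_{q^{d_s}}$-subspaces of a common ambient space admit a simultaneous reduced row echelon form (refine a basis of the smallest one up through the largest), which is precisely what is needed.
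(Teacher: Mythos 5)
Your proof is correct and is essentially the argument the paper intends: the paper gives no separate proof of Lemma~\ref{lem:FlagStructure}, calling it a straightforward generalization of Lemma~\ref{lem:structure}, and your write-up carries out exactly that generalization (the $u$-stable subspaces $W_k^{(s)}=\pi_s(\ker(\pi_{s+1})\cap V_k)$ are nested $\FF_q[u_s]=\FF_{q^{d_s}}$-subspaces, their simultaneous reduced echelon form over $\FF_{q^{d_s}}$ expands over $\FF_q$ into pivot blocks $I_{d_s}$, and the remaining blocks are handled by the same block row reduction as in the Grassmannian case). One cosmetic remark, inherited from the paper's own wording: $\dim_{\FF_q} W_k^{(s)}$ is actually the partial sum $\beta^{(s)}_1+\cdots+\beta^{(s)}_k$ rather than $\beta^{(s)}_k$, but this plays no role in the argument.
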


We are now in a good position to prove the main result on the flag
varieties.

\begin{proof}[Proof of Theorem~\ref{thm:flags}]
  Take weak compositions $\beta^{(1)},\dots,\beta^{(\ell)}$ of
  $\alpha_1,\dots,\alpha_\ell$, respectively, such that the
  component-wise sum of $\beta^{(1)},\ldots,\beta^{(\ell)}$ is equal
  to $\beta$. Let $u=(u_1,\ldots,u_\ell)$ be an element in $T_\alpha$
  with $[\FF_q[u_s]:\FF_q]=d_s$ for $s=1,\ldots,\ell$. Fix the
  embeddings $\omega_s: \FF_q^{\alpha_s}\hookrightarrow
  \CC^\times$. Consider all flags $F$ in $\Fl(\beta)$ which are fixed
  by $u$ and have $\beta^{(s)}(F)=\beta^{(s)}$, $s=1,\ldots,\ell$.

  %%%   There is a $u$-equivariant map
%%%   \[
%%%   \Fl(\beta) \to \prod_{s=1}^\ell \Fl(\beta^{(s)})
%%%   \]
%%%   taking a flag $F$ to the tuple of flags given by represented by
%%%   the matrices $F_{ss}$.  From this we see that $F\cdot u = F$
%%%   implies that $F_{ss} u_r = F_{ss}$. By equation (\ref{qt}), the
%%%   number of choices for $F_{ss}$ is
%%%   $$
%%%   \qbinom{\alpha_s}{\beta^{(s)}}{q,\omega_s(u_s)}= \sum_{w_s\in
%%%   \SS_{\alpha_s}/\SS_{\beta^{(s)}}} \wt(w_s;\omega_s(u_s)).
%%%   $$
%%%   We conclude that $F$ is fixed by $u$ if and only if
%%%   [[... continues as below]]

  Let $[F_{rs}]_{r,s=1}^\ell$ be the row echelon form of $F$ given
  by Lemma~\ref{lem:FlagStructure}. Similarly to the proof of
  Theorem~\ref{thm:main}, it follows from Corollary~\ref{cor:action},
  Lemma~\ref{BlockForm}(a), and Lemma~\ref{lem:FlagStructure} that
  $F_{ss}$ is fixed by $u_s$ for $s=1,\dots,\ell$, and
  \begin{equation}\label{eq:F_rs}
    {\rm diag}(U_r,\dots,U_r)F_{rs}=
    F_{rs}{\rm diag}(U_s,\dots,U_s)
  \end{equation}
  for all $1\leq s<r\leq \ell$.  By equation (\ref{qt}), the
  number of choices for $F_{ss}$ is
  $$
  \qbinom{\alpha_s}{\beta^{(s)}}{q,\omega_s(u_s)}= \sum_{w_s\in
    \SS_{\alpha_s}/\SS_{\beta^{(s)}}} \wt(w_s;\omega_s(u_s)).
  $$
By Lemma~\ref{BlockForm} (d) and Lemma~\ref{UpperBlock}, the number
of solutions for $F_{rs}$ is
\[\wt(F_{rs};\omega_r(u_r),\omega_s(u_s)).\]

Multiplying all choices for the various submatrices $F_{rs}$ one
obtains the number of solutions for $F$ from
\[
\sum_{w:(\forall s)\beta^{(s)}(w)=\beta^{(s)}} \wt(w;\alpha)
\]
by setting $t_s=\omega_s(u_s)$, $1\leq s\leq \ell$.
\end{proof}

\begin{remark}\label{rem:CSPflag}
  Similarly to Corollary~\ref{cor:main}, and as the above proof shows,
  for fixed weak compositions $\beta^{(1)},\ldots,\beta^{(\ell)}$ of
  $\alpha_1,\ldots,\alpha_\ell$ whose component-wise sum equals
  $\beta$, one can refine the cyclic sieving phenomenon to the triple
  \[
  \left(\bigcup_{w: (\forall r)\beta^{(r)}(w)=\beta^{(r)}}C_w,\
    \sum_{w: (\forall r)\beta^{(r)}(w)=\beta^{(r)}} \wt(\alpha;w),\
    T_\alpha\right)
  \]
  and the polynomial $\sum_{w: (\forall r)\beta^{(r)}(w)=\beta^{(r)}} \wt(w,\alpha)$
  factors nicely. Taking $t_1,\ldots,t_\ell\to 1$ leads to
  \[
  \qbinom{\alpha_1+\dots+\alpha_\ell}{\beta_1,\dots,\beta_m}{q}
  =\sum_{\beta^{(1)},\dots,\beta^{(\ell)}}
  \prod_{r=1}^\ell\qbinom{\alpha_r}{\beta^{(r)}}{q} \prod_{1\leq s\leq
    r\leq\ell} \prod_{1\leq i<j\leq m}q^{\beta^{(r)}_i\beta^{(s)}_j},
  \]
  summed over all weak compositions $\beta^{(r)}$ of $\alpha_r$,
  $1\leq r\leq\ell$, with component-wise sum
  $\beta^{(1)}+\dots+\beta^{(\ell)}=\beta$.  This generalizes the
  $q$-Vandermonde identity.

  On the other hand, by Equation (\ref{q->1}) and the
  following limit
  \[
  \lim_{q\to 1}\ [a,b]_q=1,\quad a\ne b,
  \]
  one can view $Y_{\alpha,\beta}(\tt)$, defined in Equation (\ref{eq:q=1}),
  as a ``$q=1$'' version of $X_{\alpha,\beta}(\tt)$.
\end{remark}

\section{Further Questions}

The partial flag variety $\Fl(\beta)$ can be identified with the
parabolic cosets $G/P_\beta$, where $G=GL(n,\FF_q)$ and $P_\beta$ is
the parabolic subgroup of all block upper triangular matrices with
invertible diagonal blocks of size $\beta_1,\dots,\beta_m$. If $C$
denotes $\FF_{q^n}^\times$ then Springer's theorem~\cite{Springer}
asserts that, as $\FF_q[G\times C]$-bimodules, the coinvariant
algebra $\FF_q[{\bf
  x}]/(\FF_q[{\bf x}]^G_+)$ and the group algebra $\FF_q[G]$ have the
same composition factors. The cyclic sieving phenomenon for the triple
$(G/P_\beta,\ X_{n,\beta}({\bf t}),\ C)$ is a consequence of
Springer's theorem, with
\[
X_{n,\beta}({\bf t})=\qbinom{n}{\beta}{q,t}= {\rm
Hilb}\left(\FF_q[{\bf x}]/(\FF_q[{\bf x}]^G_+),t\right).
\]
See Broer, Reiner, Smith, and Webb \cite{BRSW} for details and
generalizations.

Is there a Springer-type result for the $\FF_q[G\times
T_\alpha]$-module $\FF_q[G]$ which would imply the cyclic sieving
phenomenon for the $T_\alpha$-action on $G/P_\beta$? There might be
some clue suggested by the factorization of the sieving polynomial
in Corollary~\ref{cor:main} or Remark~\ref{rem:CSPflag} (even for
the ``$q=1$'' version).

Now let $G$ be a linear algebraic group defined over the algebraic
closure of $\FF_q$ and fix a maximal split torus. Let $J$ be a subset
of the positive roots of $G$. This subset $J$ defines a parabolic
subgroup $P_J \subset G$ and the quotient $G/P_J$ is a generalized
flag variety. The set of $\FF_q$-rational points of $G/P_J$ is
invariant under the natural action of a maximal torus $T \subset
G(\FF_q)$.  Finally, let $W$ denote the Weyl group of $G$, $W_J$ a
parabolic subgroup and $W^J$ the shortest length coset representatives
of $W/W_J$.

When $G=\GL_k$, we have succeeded in expressing all of our cyclic
sieving polynomials in the form
\[
\sum_{w \in W^J} \wt(w;T),
\] {\it i.e.}, to each coset representative $w$ we have associated a
polynomial weight depending on $T$, which is a product over those
positive roots made negative by $w$.  Can this be extended to groups
$G$ other than $\GL_n$?

\section{Acknowledgements}
The authors would like to thank Julian Gold for preliminary
computations done during a summer 2010 REU at UC Davis with the
first author. Thanks also to Victor Reiner and Dennis Stanton for
helpful comments and suggestions.

\end{document}